\newcommand{\Ker}[1]{\mathrm{Ker}\left(#1\right)}     %  Kernel
\newcommand{\abs}[1]{\left\vert#1\right\vert}         %  valore assoluto
\newcommand{\set}[1]{\left\{#1\right\}}               %  insieme
\newcommand{\seq}[1]{\left<#1\right>}                 %  sottogr generato
\newcommand{\lto}{\longrightarrow}
\newcommand{\lmto}{\longmapsto}
\newcommand{\rk}{\mathrm{rk}}                  %   rango
\newcommand{\mm}[1]{\mathbb#1}                 %  mathbb
\newtheorem{theorem}{Theorem}[section]
\newtheorem{prop}{Proposition}
\newtheorem{lem}[theorem]{Lemma}
\newtheorem{cor}[theorem]{Corollary}
\newtheorem{pro}[theorem]{Proposition}
\newtheorem{que}{Question}%[section]
\newtheorem{rem}{Remark}%[section]
\begin{document}
\title{Some structural results on the non-abelian tensor square of groups}

\author{Russell D. Blyth \and Francesco Fumagalli \and Marta Morigi}

\date{}
\maketitle

\begin{abstract}
We study the non-abelian tensor square $G\otimes G$ for the class
of groups $G$ that are finitely
generated  modulo their derived subgroup. In particular, we find
conditions on $G/G'$ so that
$G\otimes G$ is isomorphic to the direct product of 
$\nabla(G)$ and the non-abelian exterior
square $G\wedge G$. For any group $G$, we characterize the non-abelian exterior
square $G\wedge G$ in terms of a presentation of $G$.
Finally, we apply our results to some classes of groups, such as
the classes of free soluble and free nilpotent groups of finite
rank, and some classes of finite $p$-groups.\\

\noindent {\it Mathematics Subject classification.} {Primary
20F05, 20F14, 20F99, 20J99.}
\end{abstract}
\noindent
\section*{Introduction}
The non-abelian tensor square $G\otimes G$ of a group $G$ is a
special case of the non-abelian tensor product $G\otimes H$ of two
arbitrary groups $G$ and $H$, that was introduced by Brown and
Loday in \cite{BL1,BL2} and arises from applications of a generalized Van Kampen theorem
 in homotopy
theory.

For all $g,h\in G$ let $\null^gh=ghg^{-1}$ and
$[g,h]=ghg^{-1}h^{-1}$. Then $G\otimes G$ is defined as the group
generated by the symbols $g\otimes h$, for $g,h\in G$, subject to
the relations
$$gh\otimes k=({^gh}\otimes{^gk})(g\otimes k)\qquad\mathrm{and}\qquad
g\otimes hk=(g\otimes h)({^hg}\otimes{^hk}).$$
The definition guarantees the existence of an epimorphism
$\kappa:G\otimes G\lto G'$, defined on the generators by
$\kappa(g\otimes h)=[g,h]$ for all $g, h\in G$. Let $J(G)$ be the
kernel of the map $\kappa$, and let $\nabla(G)$ be the normal
subgroup generated by the elements $g\otimes g$, for all $g\in G$.
The group $(G\otimes G)/\nabla(G)$ is called \emph{the non-abelian
exterior square} of $G$, and denoted by $G\wedge G$. The map
$\kappa$ factorizes modulo $\nabla(G)$, thus inducing an
epimorphism $\kappa':G\wedge G\lto G'$. By results in
\cite{BL1,BL2} the kernel of the map $\kappa'$ is isomorphic to
the Schur multiplicator $M(G)$ of $G$. Let $\Gamma(G/G')$ be
Whitehead's quadratic functor, as defined in \cite{Whi}. Then
results in \cite{BL1,BL2} give a commutative diagram with exact
rows and central extensions as columns:
 $$\xymatrix{ & & 1\ar[d] & 1\ar[d]& \\
         &\Gamma(G/G')\ar[d]\ar[r]&J(G)\ar[d]\ar[r]&M(G)\ar[d]\ar[r]&1\\
 1\ar[r]&\nabla(G)\ar[d]\ar[r]&G\otimes G\ar[d]_{\kappa}\ar[r]&G\wedge G\ar[d]_{\kappa'}\ar[r]&1\\
  &1\ar[r]&G' \ar[r]^{\mathrm{id}}\ar[d]&G'\ar[d]&\\ & & 1&1&}$$

We are interested in the case when the middle  row of the above
diagram splits. Our main result in this context is the following.

\begin{prop}\label{spe}
Let $G$ be a group such that $G/G'$ is finitely generated. If
$G/G'$ has no elements of order two or if $G'$ has a complement in
$G$ then $G\otimes G\simeq \nabla(G)\times (G\wedge G)$.
\end{prop}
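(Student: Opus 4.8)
The plan is to produce a homomorphic retraction $\rho\colon G\otimes G\to\nabla(G)$ with $\rho|_{\nabla(G)}=\mathrm{id}$, and to read off the conclusion from centrality. Indeed, the second column of the displayed diagram is a central extension, so $J(G)=\ker\kappa$ — hence also its subgroup $\nabla(G)$, which lies in $J(G)$ because $\kappa(g\otimes g)=[g,g]=1$ — is central in $G\otimes G$. Given such a $\rho$, set $N=\ker\rho$: then $N\cap\nabla(G)=1$, every $x$ equals $(x\,\rho(x)^{-1})\,\rho(x)\in N\cdot\nabla(G)$, and $\nabla(G)$ is central, so $G\otimes G=\nabla(G)\times N$ with $N\simeq(G\otimes G)/\nabla(G)=G\wedge G$, as required. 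Thus everything reduces to constructing $\rho$, and the two hypotheses will be used to do this in two different ways.

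I would first record the abelian model. If $A$ is a finitely generated abelian group, the relations defining $A\otimes A$ collapse to bilinearity, so $A\otimes A=A\otimes_{\mathbb Z}A$; fixing a cyclic decomposition $A=\bigoplus_i\langle e_i\rangle$, the assignment $e_i\wedge e_j\mapsto e_i\otimes e_j$ for $i<j$ splits $1\to\nabla(A)\to A\otimes A\to A\wedge A\to1$, so there is a homomorphic retraction $\sigma_A\colon A\otimes A\to\nabla(A)$ fixing $\nabla(A)$ pointwise. Now suppose $G'$ has a complement $H$ in $G$, with retraction $r\colon G\to H$. By functoriality of the tensor square, $r$ induces $r_*\colon G\otimes G\to H\otimes H$ with $r_*(g\otimes g)=r(g)\otimes r(g)$, and the inclusion $H\hookrightarrow G$ induces $\iota\colon H\otimes H\to G\otimes G$ carrying $\nabla(H)$ into $\nabla(G)$. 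Put $\rho=\iota\circ\sigma_H\circ r_*$. Then $\rho(g\otimes g)=r(g)\otimes r(g)$, and since $g$ and $r(g)$ are congruent modulo $G'$ while (by the left column of the diagram) $\nabla(G)$ is a quotient of $\Gamma(G/G')$, we get $r(g)\otimes r(g)=g\otimes g$ in $\nabla(G)$; hence $\rho|_{\nabla(G)}=\mathrm{id}$.

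For the remaining case, write $A=G/G'$ and assume $A$ has no element of order two, so that $A$ is a direct sum of infinite cyclic groups and cyclic groups of odd order. The point is that the canonical surjection $\alpha_A\colon\Gamma(A)\to\nabla(A)$, $\gamma(a)\mapsto a\otimes a$, is then an \emph{isomorphism}. Let $\pi\colon G\otimes G\to A\otimes A$ be induced by $G\to A$ (so $\pi(g\otimes g)=\bar g\otimes\bar g$) and let $\alpha\colon\Gamma(A)\to\nabla(G)$, $\gamma(\bar g)\mapsto g\otimes g$, be the homomorphism appearing in the diagram. Define $\rho=\alpha\circ\alpha_A^{-1}\circ\sigma_A\circ\pi$; tracing a generator, $\rho(g\otimes g)=\alpha\bigl(\alpha_A^{-1}(\bar g\otimes\bar g)\bigr)=\alpha(\gamma(\bar g))=g\otimes g$, so $\rho|_{\nabla(G)}=\mathrm{id}$, completing this case.

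The step I expect to be the main obstacle is the claim that $\alpha_A\colon\Gamma(A)\to\nabla(A)$ is an isomorphism when $A$ has no $2$-torsion. This rests on the explicit behaviour of Whitehead's functor — $\Gamma(\mathbb Z/n)\cong\mathbb Z/n$ for $n$ odd, whereas $\Gamma(\mathbb Z/2n)$ carries extra $2$-torsion that is exactly what is killed in passing to $\nabla$ — combined with the direct-sum formula $\Gamma(A\oplus B)\cong\Gamma(A)\oplus\Gamma(B)\oplus(A\otimes B)$ and comparison with the parallel decomposition of $A\otimes A$. The other ingredients (functoriality of $-\otimes-$, centrality of $\nabla(G)$, and the elementary splitting $\sigma_A$ in the abelian model) are routine.
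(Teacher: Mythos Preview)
Your argument is correct and complete: both cases produce the desired retraction, the centrality of $\nabla(G)$ is used appropriately, and the key input about Whitehead's functor (that $\Gamma(A)\to\nabla(A)$ is an isomorphism when $A$ has no $2$-torsion) is exactly what is needed. The paper, however, proceeds differently. Rather than building a retraction $G\otimes G\to\nabla(G)$, it first constructs an explicit candidate complement $E(G)=\langle [x_i,x_j^\varphi]\mid i<j\rangle\,[G',G^\varphi]$ (Lemma~\ref{gene}), shows $[G,G^\varphi]=\nabla(G)\,E(G)$, and then identifies the intersection $N=\nabla(G)\cap E(G)$ with the kernel of the map $f_1\colon\nabla(G)\to\nabla(G^{\mathrm{ab}})$. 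It then argues that $N$ is always an elementary abelian $2$-group of rank at most $\rk_2(G^{\mathrm{ab}})$, so that either hypothesis forces $N=1$. The no-$2$-torsion case ultimately rests on the same fact about $\Gamma$ that you use, but the paper deploys it to show $f_1$ is injective rather than to invert $\alpha_A$. The payoff of the paper's route is that the intermediate structure survives without the hypotheses: one learns that $N$ is always an elementary abelian $2$-group and that $(G\otimes G)/N\simeq\nabla(G^{\mathrm{ab}})\times(G\wedge G)$ in general (parts (i)--(ii) of Proposition~\ref{spezzGen}), and also that $\nabla(G)\simeq\nabla(G^{\mathrm{ab}})$ under either hypothesis. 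Your approach, by contrast, is more streamlined for the bare splitting statement and avoids introducing $E(G)$ altogether, at the cost of not yielding that finer information about the obstruction $N$.
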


We will see that, under the hypotheses of Proposition 1, the
structure of the tensor square $G\otimes G$ is completely
determined once the structures of $G/G'$ and of $G\wedge G$ are
known. In \cite{BJR} Brown, Johnson and Robertson proved that if
$M(G)$ is finitely generated then $G\wedge G$ is isomorphic to the
derived subgroup of any covering group $\hat G$ of $G$ (the notion
of a covering group is well known if $G$ is finite, see
\cite{Karpi}, and in the general case
the authors of \cite{BJR} adopted a similar definition).\\
Our contribution is the following.

\begin{prop}\label{ex}
Let $G$ be a group and let $F$  be a free group such that $G\simeq
F/R$ for some normal subgroup $R$ of $F$. Then
$$G\wedge G\simeq\frac{F'}{[F,R]}.$$
\end{prop}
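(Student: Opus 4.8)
The plan is to produce explicit mutually inverse homomorphisms $\psi\colon G\wedge G\to F'/[F,R]$ and $\theta\colon F'/[F,R]\to G\wedge G$, rather than to compare the two groups merely as central extensions of $G'=F'R/R$ via Hopf's formula $M(G)\cong (F'\cap R)/[F,R]$: the latter route only yields a surjection $M(G)\to (F'\cap R)/[F,R]$, which need not be an isomorphism when $G$ is not Noetherian, so an honest inverse map seems unavoidable in any case.

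First I would fix a set-theoretic section $g\mapsto\tilde g$ of the projection $\pi\colon F\to G=F/R$ and define $\psi$ on the generators of $G\wedge G$ by $\psi(g\wedge h)=[\tilde g,\tilde h]\,[F,R]$. The first point to check is independence of the chosen lifts: if $r\in R$, the commutator identities $[xr,y]={}^{x}[r,y]\cdot[x,y]$ and $[x,yr]=[x,y]\cdot{}^{y}[x,r]$ show that the correction terms ${}^{x}[r,y]$ and ${}^{y}[x,r]$ lie in $[F,R]$, which is normal in $F$, so $[\tilde g,\tilde h]$ is well defined modulo $[F,R]$. The second point is that $\psi$ annihilates the defining relations of $G\wedge G$, namely $gh\wedge k=({}^{g}h\wedge{}^{g}k)(g\wedge k)$, $g\wedge hk=(g\wedge h)({}^{h}g\wedge{}^{h}k)$ and $g\wedge g=1$: for the first one expands $[\tilde g\tilde h,\tilde k]={}^{\tilde g}[\tilde h,\tilde k]\cdot[\tilde g,\tilde k]$ and notes that ${}^{\tilde g}[\tilde h,\tilde k]=[{}^{\tilde g}\tilde h,{}^{\tilde g}\tilde k]$, where ${}^{\tilde g}\tilde h$ and ${}^{\tilde g}\tilde k$ are lifts of ${}^{g}h$ and ${}^{g}k$, so independence of lifts lets one replace them by the chosen lifts; the second relation is symmetric, and $g\wedge g=1$ holds since $[\tilde g,\tilde g]=1$. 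Thus $\psi$ is a well-defined homomorphism, and it is surjective because $F'/[F,R]$ is generated by the classes $[a,b]\,[F,R]$ with $a,b\in F$ and $\psi(\pi(a)\wedge\pi(b))=[a,b]\,[F,R]$.

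For $\theta$ I would use functoriality of the exterior square: the projection $\pi$ induces $\pi\wedge\pi\colon F\wedge F\to G\wedge G$, and since free groups have trivial Schur multiplicator, $M(F)=1$, so the map $\kappa'\colon F\wedge F\to F'$ of the diagram above is an isomorphism. Hence $(\pi\wedge\pi)\circ(\kappa')^{-1}\colon F'\to G\wedge G$ sends $[a,b]$ to $\pi(a)\wedge\pi(b)$, and for $r\in R$ it sends $[f,r]$ to $\pi(f)\wedge 1=1$, so its kernel contains $[F,R]$ and it factors through a homomorphism $\theta\colon F'/[F,R]\to G\wedge G$. Evaluating the two composites on the respective generating sets gives $\theta\psi(g\wedge h)=\pi(\tilde g)\wedge\pi(\tilde h)=g\wedge h$ and, using independence of lifts once more, $\psi\theta([a,b]\,[F,R])=[a,b]\,[F,R]$, so $\psi$ and $\theta$ are mutually inverse isomorphisms. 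I expect the one genuinely delicate step to be checking that $\psi$ respects the two bilinearity relations of $G\wedge G$: this is where the commutator calculus and the normality of $[F,R]$ in $F$ really enter, and one must be careful to rewrite conjugates ${}^{\tilde g}[\tilde h,\tilde k]$ as commutators of suitable lifts before applying well-definedness. Everything else is bookkeeping.
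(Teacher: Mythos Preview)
Your proof is correct, and the forward map $\psi$ coincides with the paper's $\overline{\xi}$, but the paper establishes that this map is an isomorphism by a different route. Rather than constructing an explicit inverse, the paper restricts $\overline{\xi}$ to $M(G)=\Ker{\kappa'}$, obtaining $\phi\colon M(G)\to (F'\cap R)/[F,R]$, and shows $\phi$ is injective by placing it in an exact sequence $M(F^\circ)\xrightarrow{\eta_*}M(G)\xrightarrow{\phi}(F'\cap R)/[F,R]$ (where $F^\circ=F/[F,R]$) and proving that $\eta_*$ is the zero map; this vanishing is deduced from $J(F)=\nabla(F)$, i.e.\ from $M(F)=1$. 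The Short Five Lemma, applied to the commutative ladder with rows $1\to M(G)\to G\wedge G\to G'\to 1$ and $1\to (F'\cap R)/[F,R]\to F'/[F,R]\to G'\to 1$, then yields injectivity of $\overline{\xi}$.

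Your construction of $\theta$ is more economical: you use $M(F)=1$ in the equivalent form $\kappa'\colon F\wedge F\xrightarrow{\sim} F'$ and simply push forward along $\pi\wedge\pi$. This avoids both the Five Lemma and the detour through $M(F^\circ)$, and makes the inverse explicit on generators. The paper's approach, by contrast, follows Miller's classical argument and isolates the role of the five-term exact sequence in Schur multiplicators attached to the central extension $1\to R^\circ\to F^\circ\to G\to 1$. One small correction to your opening paragraph: Hopf's formula $M(G)\cong(F'\cap R)/[F,R]$ is always an isomorphism, not merely a surjection; your real point---that knowing two groups are both central extensions of $G'$ by $M(G)$ does not by itself force them to be isomorphic---is valid, and is precisely why both proofs need the extra input $M(F)=1$.
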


As corollaries of Propositions \ref{spe} and \ref{ex}, we deduce the
structures of non-abelian tensor squares of finitely generated groups that are
free in some variety, for example, the free $n$-generated
nilpotent groups of fixed class (see Corollary
\ref{freenilpotent}) or the free $n$-generated soluble groups of
fixed derived length (see Corollary \ref{freesolvable}).\\

We mention here that a wide list of references on the non-abelian
tensor square of a group $G$ can be found in \cite{Mo} and that an
effective algorithm for computing it in the case when $G$ is
polycyclic has been recently developed by Eick and Nickel in
\cite{EN} and is implemented in the computing program GAP.\\

The paper is organized as follows. In the first section we collect
some background material and prove some new basic results on the
tensor square of an arbitrary group $G$. Proposition \ref{spe} is
proved in Section 2, while in Section 3 we prove Proposition
\ref{ex} and derive several consequences. Section 4 deals with
finite $p$-groups $G$; in particular some upper bounds on the
orders of
$G\otimes G$ and $M(G)$ are found.\\

The notation used in this paper is standard (the reader is
referred for example to \cite{Karpi}), with the only exception
that conjugation and commutation are as defined in the second
paragraph of this Introduction.

\section{Background}
Let $G$ be an arbitrary group. In order to investigate the
structure of $G\otimes G$, it is sometimes more convenient to
consider the following construction, which was introduced in
\cite{EllisLeo}.\\
Let $G^\varphi$ be a group isomorphic to $G$ via the isomorphism
$\varphi:G\lto G^\varphi$, and consider the group
$$\nu(G):=\seq{G,G^\varphi\big|{\cal R}, {\cal R}^\varphi, \null^{g_3}[g_1,g_2^\varphi]=
[\null^{g_3}g_1,(\null^{g_3}g_2)^\varphi]=\null^{g_3^\varphi}[g_1,g_2^\varphi],
\forall g_1,g_2,g_3\in G},$$ where ${\cal R}, \cal{R}^\varphi$ are
the defining relations of $G$ and $G^\varphi$ respectively (that
is, $\nu(G)$ is the quotient of the free product $G\ast G^\varphi$
by its normal subgroup generated by all the words
$\null^{g_3}[g_1,g_2^\varphi]\cdot
[\null^{g_3}g_1,(\null^{g_3}g_2)^\varphi]^{-1}$ and
$\null^{g_3^\varphi}[g_1,g_2^\varphi]\cdot
[\null^{g_3}g_1,(\null^{g_3}g_2)^\varphi]^{-1}$, $g_1,g_2,g_3\in
G$).
In \cite{Rocco91} (Proposition 2.6), the non-abelian tensor square
$G\otimes G$ is proved to be isomorphic to the commutator subgroup
$[G,G^\varphi]$ inside $\nu(G)$.\\
From now on we identify $G\otimes G$ with $[G,G^\varphi]$ and,
unless differently specified, we write $[g,h^\varphi]$ in place of
$g\otimes h$ (for $g,h\in G$). For the reader's clarity we report
here some results that we will often use.
\begin{lem}[Lemma 2.1 in \cite{Rocco91}, Lemma 2.1 in
\cite{BMMfree}]\label{basiccomm}
%non-abelian tensor square $G\otimes G$
Let $G$ be any group. The following relations hold in $\nu(G)$.
\begin{enumerate}
\item[(i)]
$\null^{[g_3,g_4^\varphi]}[g_1,g_2^\varphi]=\null^{[g_3,g_4]}[g_1,g_2^\varphi]=\null^{[g_3^\varphi,g_4]}[g_1,g_2^\varphi]$,
for all $g_1, g_2, g_3, g_4\in G$.
\item[(ii)] $[g_1^\varphi,g_2,g_3]=[g_1,g_2^\varphi,g_3]=[g_1,g_2,g_3^\varphi]=
           [g_1^\varphi,g_2^\varphi,g_3]=[g_1^\varphi,g_2,g_3^\varphi]=[g_1,g_2^\varphi,g_3^\varphi]$,
          for all $g_1,g_2, g_3\in G$.
\item[(iii)] $[g_1,[g_2,g_3]^\varphi]=[g_2,g_3,g_1^\varphi]^{-1}$, for all $g_1,g_2, g_3\in G$.
\item[(iv)] $[g,g^\varphi]$ is central in $\nu(G)$ for all $g\in G$.
\item[(v)] $[g_1,g_2^\varphi][g_2,g_1^\varphi]$ is central in $\nu(G)$ for all $g_1,g_2\in G$.
\item[(vi)] $[g,g^\varphi]=1$ for all $g\in G'$.
\end{enumerate}
\end{lem}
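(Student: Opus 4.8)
The plan is to deduce all six relations from the two families of defining relations of $\nu(G)$ --- abbreviate them $(\ast)$: ${}^{g}[g_1,g_2^\varphi]=[{}^{g}g_1,({}^{g}g_2)^\varphi]={}^{g^\varphi}[g_1,g_2^\varphi]$ --- together with the elementary commutator identities $[xy,z]={}^{x}[y,z]\,[x,z]$ and $[x,yz]=[x,y]\,{}^{y}[x,z]$, the Hall--Witt identity, and the fact that $\varphi$ extends to an automorphism of $\nu(G)$ swapping $G$ and $G^\varphi$ (so a ``pure $\varphi$'' commutator is simply the $\varphi$-image of a commutator in $G$). The crucial first step, which underlies everything else, is a consequence of $(\ast)$: comparing its two halves shows that $[G,G^\varphi]$ is normalized both by $G$ and by $G^\varphi$, hence is normal in $\nu(G)$, and that for each $g\in G$ conjugation by $g$ and conjugation by $g^\varphi$ agree on $[G,G^\varphi]$; equivalently, $g^{-1}g^\varphi$ centralizes $[G,G^\varphi]$.

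With this in hand, (i) is routine: write the inner automorphism of $[G,G^\varphi]$ induced by $[g_3,g_4^\varphi]=g_3g_4^\varphi g_3^{-1}(g_4^\varphi)^{-1}$ as a composite of conjugations by $g_3^{\pm1}$ and $g_4^{\pm\varphi}$, use the previous paragraph to replace each $\varphi$-conjugation by the corresponding plain one, and recognize the result as conjugation by $[g_3,g_4]$; the element $[g_3^\varphi,g_4]$ is treated symmetrically. For (iii) I would expand $[g_1,[g_2,g_3]^\varphi]=[g_1,[g_2^\varphi,g_3^\varphi]]={}^{g_1}[g_2^\varphi,g_3^\varphi]\cdot[g_2^\varphi,g_3^\varphi]^{-1}$, rewrite each ${}^{g_1}g_i^\varphi$ as $[g_1,g_i^\varphi]\,g_i^\varphi$ using $(\ast)$, and collapse the outcome with the help of the first paragraph and the commutator identities (equivalently, feed $g_1,g_2^\varphi,g_3^\varphi$ into Hall--Witt); after cancellation one is left with $[[g_2,g_3],g_1^\varphi]^{-1}$. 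Relation (ii) then drops out: the variants of $[a,b,c]$ that differ by a $\varphi$ on the last entry are identified by the first paragraph (after noting $[g_1^\varphi,g_2]=[g_2,g_1^\varphi]^{-1}\in[G,G^\varphi]$), while those differing in the first two entries are matched using (iii) together with the $\varphi$-symmetry.

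Relations (iv) and (v) are obtained by expanding $[gh,(gh)^\varphi]$ via the commutator identities into a word in $[g,g^\varphi]$, $[h,h^\varphi]$, $[g,h^\varphi]$, $[h,g^\varphi]$ and their conjugates, using (iii) to merge the mixed terms and the first paragraph to turn $\varphi$-conjugations into plain ones; this yields first that $[g,h^\varphi][h,g^\varphi]$ is conjugation-invariant, hence central ((v)), and then that each $[g,g^\varphi]$ is central ((iv)). Finally, for (vi) one starts with a single commutator $g=[a,b]$: substituting $g_1=g$ into (iii) gives $[g,g^\varphi]=[g,g^\varphi]^{-1}$, so $[g,g^\varphi]$ is central of order dividing $2$; a more careful term-by-term expansion of $[[a,b],[a,b]^\varphi]$ using (iii), (i) and (iv)--(v) forces it to be trivial, and then the identity $(uv)\otimes(uv)=(u\otimes u)(v\otimes v)(u\otimes v)(v\otimes u)$ --- a by-product of (iv) and (v) --- together with the consequence of (iii) that $c_1\otimes c_2=(c_2\otimes c_1)^{-1}$ for $c_1,c_2\in G'$, propagates the vanishing by induction from single commutators to all products of commutators, that is, to all of $G'$. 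The genuinely technical points are (iii) (a longish computation in which the $\varphi$'s must be tracked carefully) and, above all, (vi): the easy consequence of (iii) only caps the order of $[g,g^\varphi]$ at $2$, and pushing this down to triviality --- and then lifting it across $G'$ --- is where the combined force of (i), (iii), (iv) and (v) is really needed.
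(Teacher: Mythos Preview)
The paper does not give its own proof of this lemma: it is quoted verbatim from Rocco \cite{Rocco91} and \cite{BMMfree} as background material, with no argument supplied. So there is no ``paper's proof'' to compare against; your outline is effectively a reconstruction of the proofs in those references.

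As such a reconstruction, the overall architecture is right. The observation that $g^{-1}g^\varphi$ centralizes $[G,G^\varphi]$ (equivalently, that conjugation by $g$ and by $g^\varphi$ agree on $[G,G^\varphi]$) is indeed the engine behind (i)--(iii), and your derivations of (i), (ii), (iii) from it are the standard ones. Your treatment of (iv)--(v), however, is more of a heuristic than a proof: expanding $[gh,(gh)^\varphi]$ produces a word in conjugates of the four basic commutators, but extracting ``$[g,h^\varphi][h,g^\varphi]$ is conjugation-invariant'' from that expansion alone, \emph{before} knowing (iv), is not automatic---in Rocco's argument (iv) is established first (by a direct manipulation using the defining relations and (iii)), and (v) is then read off from the expansion of $[gh,(gh)^\varphi]$ once the diagonal terms are known to be central. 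Your order of deduction would need a sharper justification. For (vi) you correctly isolate the two genuine difficulties: that (iii) only gives $[g,g^\varphi]^2=1$ for a single commutator $g=[a,b]$, and that one must then both kill the residual $2$-torsion and propagate from single commutators to all of $G'$. Your description of how to do this (``a more careful term-by-term expansion \ldots forces it to be trivial'') is accurate as a summary but is precisely the step where, in Rocco's paper, the real work happens; a reader of your sketch could not reproduce it from what you have written.

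In short: the strategy matches the cited sources, (i)--(iii) are fine, but (iv)--(vi) are under-argued at exactly the points where the cited proofs are nontrivial.
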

\begin{cor}\label{basicconse}
Let $G$ be any group. Then the following hold.
\begin{enumerate}
\item[(i)] If $a,b\in G$ commute, then $[a,b^\varphi]$ and
$[b,a^\varphi]$ are central elements of $[G,G^\varphi]$.
\item[(ii)] If $g_1\in G'$ or $g_2\in G'$, then
$[g_1,g_2^\varphi]^{-1}=[g_2,g_1^\varphi]$.
\item[(iii)] If $A$ and $B$ are two subgroups of $G$ with $B\leq G'$, then
$[A,B^\varphi]=[B,A^\varphi]$. In particular, $[G,G'^\varphi]=[G',G^\varphi]$.
\item[(iv)] $[G',Z(G)^\varphi]=1$.
\end{enumerate}
\end{cor}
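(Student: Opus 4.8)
The plan is to deduce all four parts from Lemma~\ref{basiccomm}, working inside $\nu(G)$ and using three facts: $[G,G^\varphi]$ is generated by the elements $[g_1,g_2^\varphi]$ ($g_1,g_2\in G$); an element of $[G,G^\varphi]$ is central in it exactly when it commutes with every such generator; and conjugation of a generator $[g_1,g_2^\varphi]$ by an element of $G$ or of $G^\varphi$ is governed by the defining relations ${}^{g_3}[g_1,g_2^\varphi]=[{}^{g_3}g_1,({}^{g_3}g_2)^\varphi]={}^{g_3^\varphi}[g_1,g_2^\varphi]$ of $\nu(G)$. Part (i) is then immediate: if $a$ and $b$ commute then $[a,b]=[b,a]=1$, so Lemma~\ref{basiccomm}(i) gives ${}^{[a,b^\varphi]}[g_1,g_2^\varphi]={}^{[a,b]}[g_1,g_2^\varphi]=[g_1,g_2^\varphi]$ and, symmetrically, ${}^{[b,a^\varphi]}[g_1,g_2^\varphi]={}^{[b,a]}[g_1,g_2^\varphi]=[g_1,g_2^\varphi]$ for all $g_1,g_2$; hence $[a,b^\varphi]$ and $[b,a^\varphi]$ centralize every generator of $[G,G^\varphi]$, so they are central in it.

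For part (ii) note first that the identity $[g_1,g_2^\varphi]^{-1}=[g_2,g_1^\varphi]$ is symmetric in $g_1,g_2$ (take inverses), so it suffices to treat $g_2\in G'$. When $g_2=[x,y]$ is a single commutator, Lemma~\ref{basiccomm}(iii) says $[g_1,[x,y]^\varphi]=[x,y,g_1^\varphi]^{-1}=[[x,y],g_1^\varphi]^{-1}$, which is exactly the claim. For a general $g_2\in G'$, write $g_2$ as a product of commutators (recall $[x,y]^{-1}=[y,x]$) and induct on the number of factors. Set $\delta(w)=[g_1,w^\varphi][w,g_1^\varphi]$; by Lemma~\ref{basiccomm}(v) this is central in $\nu(G)$, and by the single-commutator case $\delta$ vanishes on commutators, so it is enough to verify $\delta(uv)=\delta(u)\delta(v)$ whenever $u,v\in G'$. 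After expanding $[g_1,(uv)^\varphi]=[g_1,u^\varphi]\,{}^{u^\varphi}[g_1,v^\varphi]$ and $[uv,g_1^\varphi]=[u,{}^{v}g_1^\varphi]\,[v,g_1^\varphi]$ and rewriting the conjugations ${}^{v}g_1^\varphi=[v,g_1^\varphi]g_1^\varphi$ and ${}^{u^\varphi}(\cdot)$ by means of the defining relations of $\nu(G)$, this comes down to showing that all the resulting conjugation--correction terms are central and cancel in pairs; here Lemma~\ref{basiccomm}(i) (which converts conjugation by a commutator of $G$ into conjugation by an element of $[G,G^\varphi]$) together with parts (iv)--(v) does the work. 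This verification is the one genuinely technical point of the corollary, and I expect it to be the main obstacle.

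Part (iii) is then formal. The subgroup $[A,B^\varphi]$ is generated by the elements $[a,b^\varphi]$ with $a\in A$, $b\in B\subseteq G'$; by part (ii) each such element equals $[b,a^\varphi]^{-1}\in[B,A^\varphi]$, so $[A,B^\varphi]\leq[B,A^\varphi]$, and the reverse inclusion follows in the same way, now applying part (ii) with the element of $G'$ in the first argument. Taking $A=G$ and $B=G'$ gives the last assertion. For part (iv), observe that if $z\in Z(G)$ then the defining relations of $\nu(G)$ give ${}^{z}[g_1,g_2^\varphi]=[{}^{z}g_1,({}^{z}g_2)^\varphi]=[g_1,g_2^\varphi]$, so $z$ commutes with every element of $[G,G^\varphi]$. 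Hence for a single commutator $a=[x,y]$ and $z\in Z(G)$, Lemma~\ref{basiccomm}(ii) yields $[[x,y],z^\varphi]=[x,y,z^\varphi]=[x,y^\varphi,z]=[[x,y^\varphi],z]=1$, the last equality because $[x,y^\varphi]\in[G,G^\varphi]$ commutes with $z$. Finally, for fixed $z$ the set $\{a\in G:[a,z^\varphi]=1\}$ is a subgroup: if $[a_2,z^\varphi]=1$ then ${}^{a_2}z^\varphi=z^\varphi$, whence $[a_1a_2,z^\varphi]=[a_1,{}^{a_2}z^\varphi][a_2,z^\varphi]=[a_1,z^\varphi]$, and $[a^{-1},z^\varphi]={}^{a^{-1}}([a,z^\varphi]^{-1})$. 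This subgroup contains all commutators, hence all of $G'$; since $[G',Z(G)^\varphi]$ is generated by the elements $[a,z^\varphi]$ with $a\in G'$, $z\in Z(G)$, it is trivial.
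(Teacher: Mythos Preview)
Your argument is correct and follows essentially the same path as the paper: everything is deduced from Lemma~\ref{basiccomm}, part (ii) is first checked on single commutators via Lemma~\ref{basiccomm}(iii) and then extended multiplicatively, and (iii) follows formally from (ii). Two small differences are worth noting. For (i) the paper instead invokes Lemma~\ref{basiccomm}(ii): since $[a,b]=1$ one gets $[a,b^\varphi,g]=[a,b,g^\varphi]=1$ and $[a,b^\varphi,g^\varphi]=[a,b,g^\varphi]=1$ for all $g\in G$, so $[a,b^\varphi]$ is in fact central in all of $\nu(G)$, a slightly stronger conclusion obtained with no more work. For (iv) the paper simply cites \cite{Rocco91}, Lemma~2.7, whereas you give a direct proof via Lemma~\ref{basiccomm}(ii); your argument is self-contained and sound. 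Finally, the ``genuinely technical'' step you flag in (ii) is cleaner than you suggest: using the defining relation ${}^{u^\varphi}[g_1,v^\varphi]={}^{u}[g_1,v^\varphi]$ one finds directly
\[
\delta(uv)=[g_1,u^\varphi]\cdot{}^{u}\bigl([g_1,v^\varphi][v,g_1^\varphi]\bigr)\cdot[u,g_1^\varphi]
=[g_1,u^\varphi]\,\delta(v)\,[u,g_1^\varphi]=\delta(u)\delta(v),
\]
since $\delta(v)$ is central by Lemma~\ref{basiccomm}(v); no further cancellation of correction terms is needed.
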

\begin{proof}
(i) By Lemma \ref{basiccomm} (ii) it follows that both
$[a,b^\varphi]$ and $[b,a^\varphi]$ commute with any element of
$G$ and $G^\varphi$, so they are
indeed central elements of $\nu(G)$.\\
(ii) By Lemma \ref{basiccomm} (iii) the result holds if either $g_1$ or
$g_2$ is a commutator. An easy calculation shows that the result
holds if $g_1$ or $g_2$ are arbitrary elements of $G'$. (See also \cite{Rocco94}, Lemma 3.1 (iii)).\\
(iii) Assume that $A=\seq{a_i\vert i\in I}$ and $B=\seq{b_j\vert
j\in J}\leq G'$. Then $[A,B^\varphi]$ is generated by
$[a_i,b_j^\varphi]$ ($i\in I, j\in J$), which by (ii) is equal to
$[b_j,a_i^\varphi]^{-1}$. Hence $[A,B^\varphi]\leq [B,A^\varphi]$,
and, by a symmetric argument,
we have $[A,B^\varphi]=[B,A^\varphi]$.\\
(iv) See \cite{Rocco91}, Lemma 2.7.
\end{proof}
These results permit a description to be provided for the derived and the
lower central series of $G\otimes G$ in terms of those of $G$.
\begin{pro}\label{serterms}
Let $G$ be any group. Then the following hold:
\begin{enumerate}
\item[(i)] For every $n\geq 0$, $[G,G^\varphi]^{(n)}=[G^{(n)},(G^{(n)})^\varphi]$.
\item[(ii)] For every $n\geq 1$, $\gamma_{n+1}([G,G^\varphi])=[\gamma_{n}(G'),G'^{\varphi}]=[G',\gamma_{n}(G')^{\varphi}]$.
\end{enumerate}
\end{pro}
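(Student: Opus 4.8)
The plan is to prove the two parts separately, working inside $\nu(G)$ and using the identification $G\otimes G = [G,G^\varphi]$, together with the commutator calculus of Lemma~\ref{basiccomm} and Corollary~\ref{basicconse}.

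For part~(i), I would argue by induction on $n$, the case $n=0$ being the identification of $G\otimes G$ with $[G,G^\varphi]$. Assuming $[G,G^\varphi]^{(n)}=[G^{(n)},(G^{(n)})^\varphi]$, I need to show that the derived subgroup of $[G^{(n)},(G^{(n)})^\varphi]$ equals $[G^{(n+1)},(G^{(n+1)})^\varphi]$. The inclusion $\supseteq$ should follow from the Hall--Witt-type identities inside $\nu(G)$: an element $[g_1,g_2^\varphi]$ with $g_1,g_2\in G^{(n+1)}$ can be expressed via Lemma~\ref{basiccomm}(iii) as a commutator $[g_2,g_3,g_1^\varphi]^{-1}$-type expression when $g_2\in (G^{(n)})'$, placing it in $[[G^{(n)},(G^{(n)})^\varphi],[G^{(n)},(G^{(n)})^\varphi]]$. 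For the reverse inclusion, a typical generator of the derived subgroup is $[[a,b^\varphi],[c,d^\varphi]]$ with $a,b,c,d\in G^{(n)}$; by Lemma~\ref{basiccomm}(i) this equals $\null^{[c,d]}[a,b^\varphi]\cdot[a,b^\varphi]^{-1}$, which one rewrites as $[[c,d],[a,b^\varphi]]$, and then using Lemma~\ref{basiccomm}(ii) (the ``shift'' of the $\varphi$) this lands inside $[G^{(n+1)},(G^{(n)})^\varphi]$; a symmetric manipulation moves the other coordinate into $G^{(n+1)}$ as well, using Corollary~\ref{basicconse}(iii) or a direct expansion. The bookkeeping of which coordinate carries the $\varphi$ and when one may move it is the only delicate point here.

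For part~(ii), again induct on $n$. The base case $n=1$ asks for $\gamma_2([G,G^\varphi]) = [G',(G')^\varphi]$, which is essentially part~(i) with $n=1$ together with Corollary~\ref{basicconse}(iii) giving $[G',(G')^\varphi]=[\gamma_1(G'),(G')^\varphi]$. For the inductive step, assuming $\gamma_{n+1}([G,G^\varphi])=[\gamma_n(G'),(G')^\varphi]$, I compute $\gamma_{n+2}([G,G^\varphi])=[[G,G^\varphi],\gamma_{n+1}([G,G^\varphi])]=[[G,G^\varphi],[\gamma_n(G'),(G')^\varphi]]$. A generator is $[[x,y^\varphi],[u,v^\varphi]]$ with $x,y\in G$, $u\in\gamma_n(G')$, $v\in G'$; applying Lemma~\ref{basiccomm}(i) rewrites the action of $[x,y^\varphi]$ as the action of $[x,y]\in G'$, so the expression becomes $[[x,y],[u,v^\varphi]]$, an element of $[G',[\gamma_n(G'),(G')^\varphi]]$. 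Then Lemma~\ref{basiccomm}(ii) lets me shift, and Lemma~\ref{basiccomm}(iii) identifies $[u,[x,y]]$-type terms, so the commutator collapses to $[[\gamma_n(G'),G'],(G')^\varphi]=[\gamma_{n+1}(G'),(G')^\varphi]$. The equality $[\gamma_{n+1}(G'),(G')^\varphi]=[G',(\gamma_{n+1}(G'))^\varphi]$ is then Corollary~\ref{basicconse}(iii) (both subgroups lie in $G'$), so the two displayed forms agree. The reverse inclusion, that $[\gamma_{n+1}(G'),(G')^\varphi]$ actually lies in $\gamma_{n+2}([G,G^\varphi])$, is proved by writing a single commutator $[w,t^\varphi]$ with $w\in\gamma_{n+1}(G')=[\gamma_n(G'),G']$, say $w=[w_1,w_2]$ with $w_1\in\gamma_n(G')$, and using Lemma~\ref{basiccomm}(iii) to turn $[w_1,w_2,t^\varphi]$ into $[t,[w_1,w_2]^\varphi]^{-1}=[t,w^\varphi]^{-1}$-style commutators that manifestly sit in $[[G,G^\varphi],\gamma_{n+1}([G,G^\varphi])]$.

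The main obstacle I anticipate is not any single hard idea but rather the consistent management of where the $\varphi$ sits and the repeated legitimate uses of Lemma~\ref{basiccomm}(i)--(iii) to trade an ``action by a commutator of mixed type'' for an ``action by a commutator in $G$'', which is what collapses the nested commutators correctly; getting both inclusions (so genuine equality, not just containment) in part~(ii) requires care because the naive expansion only gives one direction. A secondary point is that $\gamma_n(G')$ versus $(G')^{(k)}$ must be handled so that the normal-closure subtleties in the definition of $\nabla$ and $\gamma$ do not cause the subgroups to be merely normally generated rather than generated by the obvious commutators; here Lemma~\ref{basiccomm}(i) is again the tool, since it shows that conjugation inside $[G,G^\varphi]$ by any element is realized by conjugation by an element of $G$, so normal closures in $[G,G^\varphi]$ reduce to $G$-normal closures.
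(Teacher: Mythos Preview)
Your overall strategy matches the paper's: induct on $n$, reduce part~(i) to the case $n=1$, and use Lemma~\ref{basiccomm} to relate commutators $[[a,b^\varphi],[c,d^\varphi]]$ in $[G,G^\varphi]$ with commutators $[[a,b],[c,d]^\varphi]$ in $[G',(G')^\varphi]$. The paper isolates this as a single identity
\[
(\ast)\qquad [g_1,g_2^\varphi,[g_3,g_4^\varphi]] \;=\; [[g_1,g_2],[g_3,g_4]^\varphi],
\]
and then gets equality in one stroke rather than two inclusions: both $[G,G^\varphi]'$ and $[G',(G')^\varphi]$ are normal in $\nu(G)$, and by 5.1.7 of \cite{Rob} each is the normal closure in $\nu(G)$ of the set of elements on one side of $(\ast)$, hence they coincide. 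Part~(ii) reuses $(\ast)$ with $g_1\in\gamma_n(G')$, $g_2\in G'$, $g_3,g_4\in G$.

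Your presentation has two rough spots that this packaging avoids. First, in the $\subseteq$ step of part~(i) you say the manipulation lands in $[G^{(n+1)},(G^{(n)})^\varphi]$ and that ``a symmetric manipulation moves the other coordinate into $G^{(n+1)}$ as well''; in fact the computation you describe already produces $[[a,b],[c,d]^\varphi]^{\pm 1}$ with \emph{both} entries in $G^{(n+1)}$, so no extra step is needed---carry it through and you will see this. Second, and more seriously, your $\supseteq$ arguments in both parts do not work as written. In part~(i) you propose to use Lemma~\ref{basiccomm}(iii) on $[g_1,g_2^\varphi]$ with $g_2=[a,b]\in (G^{(n)})'$, but that lemma only gives $[g_1,[a,b]^\varphi]=[a,b,g_1^\varphi]^{-1}$, which is still a single bracket $[[a,b],g_1^\varphi]^{-1}$ and not manifestly a commutator of two elements of $[G^{(n)},(G^{(n)})^\varphi]$. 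The same issue recurs in your reverse inclusion for part~(ii). What actually works is to read $(\ast)$ from right to left: writing $g_1=[a,b]$ and $g_2=[c,d]$ with $a,b,c,d\in G^{(n)}$ gives $[[a,b],[c,d]^\varphi]=[[a,b^\varphi],[c,d^\varphi]]\in[G^{(n)},(G^{(n)})^\varphi]'$, and analogously $[[w_1,w_2],[t_1,t_2]^\varphi]=[[w_1,w_2^\varphi],[t_1,t_2^\varphi]]\in[\gamma_{n+1}([G,G^\varphi]),[G,G^\varphi]]$ for $w_1\in\gamma_n(G')$, $w_2\in G'$, $t_i\in G$. Once $(\ast)$ is stated explicitly, your two inclusions are simply its two readings, and the normal-closure worry you raise at the end is absorbed by observing (as the paper does) that both subgroups in question are already normal in $\nu(G)$.
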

\begin{proof}$\null$\\
(i) We use induction on $n$. The result being trivial for $n=0$,
assume $n=1$. For $g_i\in G$ ($i=1,\ldots,4$) we have the following identity (which is stated without proof in Lemma 11 of \cite{BMpoly}):
\begin{align*}
(\ast)\, \,  [g_1,g_2^{\varphi},[g_3,g_4^{\varphi}]]&=
[g_1,g_2^{\varphi}]\big(\null^{[g_3,g_4^{\varphi}]}[g_1,g_2^{\varphi}]\big)^{-1}\\
&=[g_1,g_2^{\varphi}]\big(\null^{[g_3,g_4]}[g_1,g_2^{\varphi}]\big)^{-1}\quad {\textrm{ by Lemma \ref{basiccomm}(i),}}\\
&=[g_1,g_2^{\varphi},[g_3,g_4]^{\varphi}]\quad {\textrm{ by the defining properties of $\nu(G)$,}}\\
&=[g_1,g_2,[g_3,g_4]^{\varphi}]\quad {\textrm{ by Lemma
\ref{basiccomm}(ii).}}
\end{align*}
We note that both $[G,G^{\varphi}]'$ and $[G',G'^\varphi]$ are normal in $\nu(G)$. By 5.1.7 of \cite{Rob} we have that $[G,G^{\varphi}]'=[G,G^{\varphi},[G,G^{\varphi}]]$ is the normal closure in $\nu(G)$ of the subgroup
generated by the elements of the form
$$[g_1,g_2^{\varphi},[g_3,g_4^{\varphi}]],$$
and $[G',G'^\varphi]$ is the normal closure in $\nu(G)$ of the
subgroup generated by the elements of the form
$$[g_1,g_2,[g_3,g_4]^{\varphi}].$$
Therefore $(\ast)$ shows that $[G,G^\varphi]'=[G',(G')^\varphi]$.\\
We now assume that the result is true for $n$ and we prove it for $n+1$.\\
By the inductive hypothesis and the argument above applied
to the group $G^{(n)}$, we have that
$$[G,G^{\varphi}]^{(n+1)}=([G,G^{\varphi}]^{(n)})'=([G^{(n)},(G^{(n)})^{\varphi}])'=[G^{(n+1)},(G^{(n+1)})^{\varphi}].$$

\noindent (ii) The case $n=1$ has already been proved in (i). Thus
we assume that the result is true for $n$ and we prove it for
$n+1$. By the inductive hypothesis, we have that
$$\gamma_{n+2}([G,G^{\varphi}])=
[\gamma_{n+1}([G,G^{\varphi}]),[G,G^{\varphi}]]=
[\gamma_{n}(G'),G'^{\varphi},[G,G^{\varphi}]].$$
By 5.1.7 of \cite{Rob} $[\gamma_{n}(G'),G'^{\varphi},[G,G^{\varphi}]]$ is the normal closure in $\nu(G)$ of
the group generated by the elements of the
form
$$[g_1,g_2^{\varphi},[g_3,g_4^{\varphi}]],\quad {\textrm{ with }}g_1\in \gamma_{n}(G'),
g_2\in G' {\textrm{ and }} g_3,g_4\in G.$$

Similarly, $[\gamma_{n+1}(G'),G'^{\varphi}]$ is the normal closure in $\nu(G)$ of
the group generated by the elements of the
form
$$[[g_1,g_2],[g_3,g_4]^{\varphi}],\quad {\textrm{ with }}g_1\in \gamma_{n}(G'),
g_2\in G' {\textrm{ and }} g_3,g_4\in G.$$
By $(\ast)$, we have that
$[g_1,g_2^{\varphi},[g_3,g_4^{\varphi}]]=[[g_1,g_2],[g_3,g_4]^{\varphi}]$, so that
$\gamma_{n+2}([G,G^{\varphi}])=[\gamma_{n+1}(G'),G'^{\varphi}]$. This completes the induction.
Finally, by Corollary \ref{basicconse} (iii), we have
$$[[\gamma_{n+1}(G'),G'^{\varphi}]=[G',\gamma_{n+1}(G')^{\varphi}],$$
for all $n\geq 1$.
\end{proof}
We stress the fact that Proposition \ref{serterms} does not say
that in general $(G\otimes G)^{(n)}$ and $G^{(n)}\otimes G^{(n)}$
are isomorphic groups. Consider, for example, the
case $G=S_3$, where $G\otimes G$ is elementary abelian of order 4,
while $A_3\otimes A_3$ has order order 3). Indeed, the computation of $(G\otimes G)^{(n)}$ as
$[G,G^\varphi]^{(n)}=[G^{(n)},(G^{(n)})^\varphi]$
occurs within the group $\nu(G)$, whereas the calculation of $G^{(n)}\otimes G^{(n)}$ occurs as 
$[G^{(n)},(G^{(n)})^\varphi]$ within the group
$\nu(G^{(n)})$.
%Indeed, the notation
%used here is as ambiguous as the one of ordinary tensor products
%of modules (see \cite{AtMcD}, Chapter 2).\\

\noindent The following facts (given in \cite{BJR}) are seen to be
consequences of Proposition \ref{serterms} and Lemma
\ref{basicconse}.

\begin{cor}
If $G$ is a solvable group of derived length $d$, then $G\otimes G$ is solvable of
derived length $d-1$ or $d$.\\
If $G$ is a nilpotent group of class $c$, then $G\otimes G$ is
nilpotent of class $\leq \lfloor\frac{c+1}{2}\rfloor$.
\end{cor}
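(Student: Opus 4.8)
The plan is to read off both statements from Proposition \ref{serterms} together with Corollary \ref{basicconse}, using the standard fact that the non-abelian tensor square $G\otimes G$ is identified with the subgroup $[G,G^\varphi]$ of $\nu(G)$.

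For the solvable case, suppose $G$ has derived length $d$, so $G^{(d)}=1$ and $G^{(d-1)}\ne 1$. By Proposition \ref{serterms}(i), $(G\otimes G)^{(n)}=[G^{(n)},(G^{(n)})^\varphi]$ for all $n\ge 0$. Taking $n=d$ gives $(G\otimes G)^{(d)}=[1,1^\varphi]=1$, so $G\otimes G$ is solvable of derived length at most $d$. For the lower bound, I would take $n=d-1$: then $(G\otimes G)^{(d-1)}=[G^{(d-1)},(G^{(d-1)})^\varphi]$, and since $G^{(d-1)}$ is a nontrivial abelian group, one needs this tensor square to be nontrivial unless $G^{(d-1)}$ has a special form. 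Actually the correct conclusion is that $G\otimes G$ has derived length $d-1$ or $d$: the derived length cannot drop below $d-1$ because already the first derived subgroup $(G\otimes G)'=[G',G'^\varphi]$, and iterating Proposition \ref{serterms}(i) shows $(G\otimes G)^{(d-2)}=[G^{(d-2)},(G^{(d-2)})^\varphi]$ contains (via the $\kappa$-type map or directly) a copy of $(G^{(d-2)})' = G^{(d-1)}\ne 1$, so $(G\otimes G)^{(d-2)}\neq 1$. Hence the derived length of $G\otimes G$ is at least $d-1$.

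For the nilpotent case, suppose $G$ has nilpotency class $c$, so $\gamma_{c+1}(G)=1$. By Proposition \ref{serterms}(ii), $\gamma_{n+1}(G\otimes G)=[\gamma_n(G'),G'^\varphi]$ for $n\ge 1$. Now $\gamma_n(G')\leq \gamma_{n+1}(G)$ (since $G'=\gamma_2(G)$ forces $\gamma_n(G')\le\gamma_{2n}(G)$, and a cruder bound $\gamma_n(G')\le\gamma_{n+1}(G)$ also holds), so I want to find the smallest $n$ with $\gamma_n(G')=1$. Using $\gamma_n(\gamma_2(G))\le\gamma_{2n}(G)$, we get $\gamma_n(G')=1$ as soon as $2n\ge c+1$, i.e. $n\ge\lceil(c+1)/2\rceil$. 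Then for such $n$, $\gamma_{n+1}(G\otimes G)=[\gamma_n(G'),G'^\varphi]=[1,G'^\varphi]=1$, so $G\otimes G$ is nilpotent of class at most $n = \lceil (c+1)/2\rceil = \lfloor (c+2)/2 \rfloor$; a small reindexing check against the stated bound $\lfloor(c+1)/2\rfloor$ is the one place I expect to fuss over the arithmetic, since depending on the parity of $c$ one sharpens the estimate $\gamma_n(\gamma_2 G)\le \gamma_{2n}G$ or instead notes that $\gamma_n(G')=1$ already when $2n-1\ge c+1$ in some formulations.

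The main obstacle is purely the bookkeeping of the two indices: matching $[\gamma_n(G'),G'^\varphi]=1$ to the right threshold and verifying the stated floor/ceiling expressions, together with making the lower bound in the solvable case clean (showing $(G\otimes G)^{(d-2)}\ne 1$ rather than merely $\le d$). Both reductions are immediate from Proposition \ref{serterms}; no genuinely new computation inside $\nu(G)$ is needed beyond the commutator-collection inequality $\gamma_n(\gamma_m(G))\le\gamma_{nm}(G)$ and the observation that the tensor square of a nontrivial group that is finitely generated modulo its derived subgroup — or here simply the image under $\kappa$ — is nontrivial whenever the relevant derived subgroup is nontrivial.
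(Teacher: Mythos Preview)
Your solvable argument is fine and matches the paper's intended use of Proposition~\ref{serterms}(i): the upper bound $(G\otimes G)^{(d)}=1$ is immediate, and for the lower bound the image of $[G^{(d-2)},(G^{(d-2)})^\varphi]$ under the commutator map $\kappa$ is $(G^{(d-2)})'=G^{(d-1)}\ne 1$, so $(G\otimes G)^{(d-2)}\ne 1$. (It maps \emph{onto} $G^{(d-1)}$ rather than ``contains a copy'', but the conclusion is the same.)

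The nilpotent case, however, has a real gap. From $\gamma_m(G')\le\gamma_{2m}(G)$ you only obtain $\gamma_m(G')=1$ once $2m\ge c+1$, i.e.\ $m\ge\lceil(c+1)/2\rceil$; this yields class $\le\lceil(c+1)/2\rceil$, which exceeds the claimed $\lfloor(c+1)/2\rfloor$ by $1$ whenever $c$ is even (e.g.\ for $c=2$ you get $2$ instead of $1$). Your hedging about ``$2n-1\ge c+1$ in some formulations'' does not rescue this: no sharper containment of $\gamma_n(G')$ in the lower central series of $G$ is available in general. The missing ingredient is precisely Corollary~\ref{basicconse}(iv), namely $[G',Z(G)^\varphi]=1$. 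Combining Proposition~\ref{serterms}(ii) with Corollary~\ref{basicconse}(iii) gives
\[
\gamma_{m+1}(G\otimes G)=[\gamma_m(G'),G'^{\varphi}]=[G',\gamma_m(G')^{\varphi}],
\]
and this already vanishes once $\gamma_m(G')\le Z(G)$, not only once $\gamma_m(G')=1$. Since $\gamma_m(G')\le\gamma_{2m}(G)\le Z(G)$ as soon as $2m\ge c$, you obtain class $\le\lceil c/2\rceil=\lfloor(c+1)/2\rfloor$, the stated bound. This is exactly why the paper cites Corollary~\ref{basicconse} alongside Proposition~\ref{serterms}.
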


\section{The structure of the non-abelian tensor square}
In this section we prove some fundamental facts about the structure
of the non-abelian tensor square of any group $G$ such that
$G^{ab}=G/G'$ is finitely generated. We conjecture that our results
remain true under the weaker assumption that $G/G'$ is a
restricted direct product of cyclic groups.\\

\medskip

\noindent For a finitely generated abelian group $A$, its non-abelian tensor
square is simply the ordinary tensor product of two copies of $A$.
In particular, if ${\cal A}=\{a_1,\ldots, a_s\}$ is a set of generators of $A$ such that $A$ is the direct product of the cyclic groups
$\seq{a_i}$, $i=1,\ldots,s$, then we can write
$$A\otimes A=\nabla(A)\times E_{\cal A}(A),$$
where
\begin{align*}
\nabla(A)&=\seq{[a_i,a_i^\varphi],
[a_i,a_j^\varphi][a_j,a_i^\varphi]\vert 1\leq i<j\leq s}
\quad {\mathrm{and}}\\
     E_{\cal A}(A)&=\seq{[a_i,a_j^\varphi]\vert 1\leq i<j\leq s}.
\end{align*}

We observe that $\nabla(A)$ is independent of the set of
generators ${\cal A}$ of $A$, since in fact
$\nabla(A)=\seq{[a,a^\varphi]\vert a\in A}$, while $E_{\cal A}(A)$
is a complement of $\nabla(A)$ in $A\otimes A$ that does depend
on the choice of ${\cal A}$.

It turns out that for any group $G$ such that $G^{ab}$ is
finitely generated (in particular, for any finitely generated group
$G$), the structure of $\nabla(G)$ essentially depends  on $G^{ab}$.
The following Lemma, which improves Proposition 3.3 of Rocco
\cite{Rocco94}, makes this observation precise.
\begin{lem}\label{gene}
Let $G$ be a group such that $G^{ab}$ is finitely generated by the
elements $\set{x_iG'\vert i=1,\ldots,s}$. Set $E(G)$ to be 
$\seq{[x_i,x_j^\varphi]\vert i< j}[G', G^\varphi]$.
Then the following hold:
\begin{enumerate}
\item[(i)] $\nabla(G)$ is generated by the elements of the set\\
$\set{[x_i,x_i^\varphi], [x_i,x_j^\varphi][x_j,x_i^\varphi]\vert
1\le i< j\le s}$.
\item[(ii)] $[G, G^\varphi]=\nabla(G)E(G)$.
\end{enumerate}
\end{lem}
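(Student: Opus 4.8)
The plan is to work inside $\nu(G)$, identifying $G\otimes G$ with $[G,G^\varphi]$, and to use throughout that $\nabla(G)=\seq{[g,g^\varphi]\mid g\in G}$ is \emph{central} in $\nu(G)$ by Lemma~\ref{basiccomm}(iv); in particular $\nabla(G)$ is abelian and equals the ordinary (not just normal) subgroup generated by the elements $[g,g^\varphi]$. Below, ``reducing bilinearly'' refers to repeated use of the two defining relations $gh\otimes k=({}^gh\otimes{}^gk)(g\otimes k)$ and $g\otimes hk=(g\otimes h)({}^hg\otimes{}^hk)$ of $G\otimes G$, each application of which (via the defining relations of $\nu(G)$) also produces a conjugation twist of the form ${}^a[b,c^\varphi]=[{}^ab,({}^ac)^\varphi]$.

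For (i), the cornerstone is the identity
$$[gh,(gh)^\varphi]=[g,g^\varphi]\,[h,h^\varphi]\,[g,h^\varphi][h,g^\varphi]\qquad(g,h\in G),$$
obtained by reducing the left-hand side bilinearly and then collapsing all the resulting conjugation twists, using centrality of $\nabla(G)$ together with Lemma~\ref{basiccomm}(v). Two immediate consequences are that $\tau(g,h):=[g,h^\varphi][h,g^\varphi]$ lies in $\nabla(G)$ for all $g,h$ (so each proposed generator lies in $\nabla(G)$, giving $D\le\nabla(G)$, where $D$ denotes the subgroup they generate), and, taking $h=g^{-1}$, that $[g^{-1},(g^{-1})^\varphi]=[g,g^\varphi]$. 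I would then verify that $\tau$ is symmetric and biadditive in its two arguments --- again by bilinear reduction together with centrality --- and that $\tau(g,h)=1$ whenever $g\in G'$ or $h\in G'$ by Corollary~\ref{basicconse}(ii); hence $\tau$ induces a biadditive map $G^{ab}\times G^{ab}\to\nabla(G)$, so every $\tau(g,h)$ is a product of elements $\tau(x_i,x_j)^{\pm1}$, each of which lies in $D$ (note $\tau(x_i,x_i)=[x_i,x_i^\varphi]^2$ and $\tau(x_i,x_j)=\tau(x_j,x_i)$). Combining this with the cornerstone identity and with $[g^{-1},(g^{-1})^\varphi]=[g,g^\varphi]$, the set $\set{g\in G\mid[g,g^\varphi]\in D}$ is seen to be closed under products and inverses, hence is a subgroup; it contains every $x_i$, and it contains $G'$ since $[w,w^\varphi]=1$ for $w\in G'$ by Lemma~\ref{basiccomm}(vi). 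As $G=\seq{x_1,\dots,x_s}G'$, this subgroup is all of $G$, so $\nabla(G)\le D$ and (i) follows.

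For (ii), first note that $[G',G^\varphi]=[G,(G')^\varphi]$ by Corollary~\ref{basicconse}(iii), and that this subgroup is normal in $\nu(G)$ (immediate from the defining relations), so $E(G)$ is a subgroup and, $\nabla(G)$ being central, so is $\nabla(G)E(G)$. By Proposition~\ref{serterms}(i) we have $[G,G^\varphi]'=[G',(G')^\varphi]\le[G',G^\varphi]$, so $[G,G^\varphi]/[G',G^\varphi]$ is abelian. Writing arbitrary $g,h\in G$ as words in $x_1,\dots,x_s$ times elements of $G'$ and reducing $[g,h^\varphi]$ bilinearly, every contribution coming from the $G'$-parts and from the conjugation twists lies in $[G,(G')^\varphi]=[G',G^\varphi]$; hence modulo this normal subgroup --- where all elements commute --- one finds that $[G,G^\varphi]$ is generated by $[G',G^\varphi]$ together with the elements $[x_i,x_j^\varphi]$, $1\le i,j\le s$. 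Finally $[x_i,x_i^\varphi]\in\nabla(G)$; for $i<j$ one has $[x_i,x_j^\varphi]\in E(G)$ by definition; and for $i>j$, $[x_i,x_j^\varphi]=\tau(x_i,x_j)\,[x_j,x_i^\varphi]^{-1}$ with $\tau(x_i,x_j)\in\nabla(G)$ and $[x_j,x_i^\varphi]^{-1}\in E(G)$. Since also $[G',G^\varphi]\le E(G)$, this gives $[G,G^\varphi]\le\nabla(G)E(G)$; the reverse inclusion is clear, so $[G,G^\varphi]=\nabla(G)E(G)$.

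The part I expect to require the most care --- the main obstacle --- is precisely the bookkeeping in the bilinear reductions: one must track the conjugation twists ${}^a[b,c^\varphi]=[{}^ab,({}^ac)^\varphi]$ created at each step and repeatedly invoke Lemma~\ref{basiccomm}, Corollary~\ref{basicconse} and centrality to argue that these twists vanish modulo the relevant subgroup --- $\nabla(G)$ in part (i) and $[G',G^\varphi]$ in part (ii). None of this is conceptually difficult, but it is where the work lies.
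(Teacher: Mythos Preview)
Your proposal is correct and follows the same underlying ideas as the paper, but with a different balance between what is proved and what is cited.

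For (i), the paper simply invokes an external result (Lemma~17 in \cite{BMpoly}, equivalently Proposition~3.3 in \cite{Rocco94}) stating that for any generating set $\mathcal{G}$ of $G$, the subgroup $\nabla(G)$ is generated by the elements $[a,a^\varphi]$ and $[a,b^\varphi][b,a^\varphi]$ with $a,b\in\mathcal{G}$; it then applies this to $\mathcal{G}=\{x_1,\dots,x_s\}\cup\mathrm{Y}$ with $\mathrm{Y}$ a generating set of $G'$, and kills the superfluous generators via Lemma~\ref{basiccomm}(vi) and Corollary~\ref{basicconse}(ii). Your argument instead reproves that cited fact from scratch: the cornerstone identity for $[gh,(gh)^\varphi]$ and the biadditivity of $\tau$ are exactly what lies behind the Rocco/Blyth--Morse lemma. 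This makes your proof self-contained at the cost of the bookkeeping you flag; note that the biadditivity check requires first showing $\tau$ depends only on cosets modulo $G'$ (so that the conjugation twists ${}^gg',{}^gh$ can be replaced by $g',h$), which you have the ingredients for via Corollary~\ref{basicconse}(ii) but should make explicit.

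For (ii), the paper offers two routes: a direct expansion of $[x_ig_1,(x_jg_2)^\varphi]$, or the observation that the natural map $f\colon[G,G^\varphi]\to[G^{ab},(G^{ab})^\varphi]$ has kernel $[G',G^\varphi]$ and image $f(\nabla(G)\seq{[x_i,x_j^\varphi]\mid i<j})$. Your argument is a fleshed-out version of the first route; the second route is slightly slicker because the abelian target absorbs all the commutator bookkeeping at once.
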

\begin{proof}$\null$\\
(i) Let $\textrm{Y}=\set{y_{\alpha}}_{\alpha\in I}$ be a set of
generators for $G'$ and let $\textrm{X}=\set{x_i}_{i=1}^s$. Then
$\mathcal{G}=\textrm{X}\cup\textrm{Y}$ generates $G$. By Lemma 17
in \cite{BMpoly} (or Proposition 3.3 in \cite{Rocco94})
$\nabla(G)$ is generated by
$$\set{[a,a^\varphi], [a,b^\varphi][b,a^\varphi]\vert a,b\in
\mathcal{G}}.$$
Note that $[a,a^\varphi]=1$ if $a\in\textrm{Y}$ (by Lemma
\ref{basiccomm}(vi)) and similarly $[a,b^\varphi][b,a^\varphi]=1$
if at least one among $a$ and $b$ lies in $\textrm{Y}$ (Corollary
\ref{basicconse} (ii)).

\noindent (ii)  The proof follows by a direct expansion of
the factors $[x_ig_1,(x_jg_2)^\varphi]$ ($g_1,g_2\in G'$). Alternatively, 
consider the map $f:[G, G^\varphi]\lto [G^{ab}, (G^{ab})^\varphi]$
induced by the projection onto $G^{ab}$. Then
$\mathrm{Im}\,f=f\big(\nabla(G)\seq{[x_i,x_j^\varphi]\vert i<
j}\big)$ and $\mathrm{Ker}\,f=[G', G^\varphi]=[G, (G')^\varphi]$ (see
\cite{Rocco91}, Remark 3), so $[G, G^\varphi]=\nabla(G)E(G)$.
\end{proof}

We are now able to describe the structure of the non-abelian tensor square
$G\otimes G$ in terms of  $\nabla(G)$ and the non-abelian exterior
square $G\wedge G$. Our result generalizes
Proposition 8 in \cite{BJR} and Proposition 3.1 in \cite{BMMfree}.
%
%                SPEZZAMENTO
%
\begin{pro}\label{spezzGen}
Assume that $G^{ab}$ is finitely generated. Then, with the
notation of Lemma \ref{gene}, the following hold.
\begin{enumerate}
\item[(i)] The map $f_1$ defined to be the restriction 
$f\vert _{\nabla(G)}:\nabla(G)\lto \nabla(G^{ab})$ of 
the projection onto $G^{ab}$, has kernel $N = E(G)\cap
\nabla(G)$. Moreover, $N$ is a central elementary abelian
$2$-subgroup of $[G, G^\varphi]$ of rank at most the $2$-rank $\rk_2(G^{ab})$
of $G^{ab}$.
\item[(ii)] $[G, G^\varphi]/N\simeq \nabla(G^{ab})\times (G\wedge G)$.
\item[(iii)] Suppose either that $G^{ab}$ has no elements of order two or
that $G'$ has a complement in $G$. Then $\nabla(G)\simeq
\nabla(G^{ab})$ and $G\otimes G\simeq \nabla(G)\times (G\wedge
G)$.
\end{enumerate}
\end{pro}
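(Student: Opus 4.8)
The plan is to establish the three parts in sequence, using the map $f:[G,G^\varphi]\lto[G^{ab},(G^{ab})^\varphi]$ induced by the projection $G\to G^{ab}$, whose kernel is $[G',G^\varphi]$ and which restricts (by Lemma \ref{gene}(i)) to a surjection $f_1=f|_{\nabla(G)}:\nabla(G)\lto\nabla(G^{ab})$, since $f$ sends the generators $[x_i,x_i^\varphi]$ and $[x_i,x_j^\varphi][x_j,x_i^\varphi]$ of $\nabla(G)$ to the corresponding generators of $\nabla(G^{ab})$. For part (i), I would first identify $\Ker f_1=\nabla(G)\cap[G',G^\varphi]$; since $E(G)=\seq{[x_i,x_j^\varphi]\mid i<j}[G',G^\varphi]$ and $f$ sends the $[x_i,x_j^\varphi]$ to an independent generating set of $E_{\mathcal X}(G^{ab})$, an element of $\nabla(G)$ lying in $E(G)$ must in fact lie in $[G',G^\varphi]$, giving $N:=E(G)\cap\nabla(G)=\nabla(G)\cap[G',G^\varphi]=\Ker f_1$. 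Then I would show $N$ is central elementary abelian of rank at most $\rk_2(G^{ab})$: centrality follows because $N\le\nabla(G)$ and $\nabla(G)$ is generated by elements that are either central by Lemma \ref{basiccomm}(iv) or central by Lemma \ref{basiccomm}(v); the exponent-two and rank bound come from examining $\nabla(G^{ab})=\nabla(G)/N$ versus the free abelian plus torsion structure — concretely, if $x_iG'$ has infinite or odd order then $[x_i,x_i^\varphi]$ already has a well-defined infinite or odd order image, while only the generators $[x_i,x_i^\varphi]$ with $x_iG'$ of even order can contribute to $N$, and $2[x_i,x_i^\varphi]=[x_i,x_i^\varphi]^2$ maps into $[G',G^\varphi]$ when $x_i^2\in G'$, forcing $N$ to be spanned by at most $\rk_2(G^{ab})$ involutions.

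For part (ii), the strategy is to produce a direct product decomposition of $[G,G^\varphi]/N$. Observe that $E(G)\supseteq[G',G^\varphi]=\Ker f$ and $E(G)/[G',G^\varphi]\cong E_{\mathcal X}(G^{ab})$ under $f$; moreover $E(G)$ is normal in $[G,G^\varphi]$ (indeed $[G,G^\varphi]/E(G)$ is a quotient of $\nabla(G^{ab})$, hence abelian, and in fact $E(G)\nor\nu(G)$ can be checked on generators). By Lemma \ref{gene}(ii), $[G,G^\varphi]=\nabla(G)E(G)$, and $\nabla(G)\cap E(G)=N$, so modulo $N$ we get $[G,G^\varphi]/N=(\nabla(G)/N)\cdot(E(G)/N)$ with the two factors meeting trivially. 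I would check that this is an \emph{internal direct product} by verifying that $E(G)/N$ is normal (inherited from $E(G)\nor[G,G^\varphi]$) and that $\nabla(G)/N$ is normal (again because its generators are central in $\nu(G)$, hence in $[G,G^\varphi]$, so their images are central modulo $N$). Thus $[G,G^\varphi]/N\cong(\nabla(G)/N)\times(E(G)/N)$. Now $\nabla(G)/N\cong\nabla(G^{ab})$ via $f_1$. For the other factor, $E(G)/N=E(G)/(E(G)\cap\nabla(G))\cong E(G)\nabla(G)/\nabla(G)=[G,G^\varphi]/\nabla(G)=G\otimes G/\nabla(G)=G\wedge G$. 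Combining, $[G,G^\varphi]/N\cong\nabla(G^{ab})\times(G\wedge G)$.

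For part (iii), the point is that both hypotheses force $N=1$. If $G^{ab}$ has no elements of order two, then $\rk_2(G^{ab})=0$, so by (i) $N=1$ and (ii) immediately gives $G\otimes G=[G,G^\varphi]\cong\nabla(G^{ab})\times(G\wedge G)$, with $\nabla(G)\cong\nabla(G^{ab})$. If instead $G'$ has a complement $C$ in $G$, so $G=C\ltimes G'$ with $C\cong G^{ab}$, I would use the retraction $\rho:G\to C$ to split things: $\rho$ induces an endomorphism $\rho_*$ of $\nu(G)$ (it is compatible with the defining relations) carrying $\nabla(G)$ onto $\nabla(C)$ and fixing $\nabla(C)$ pointwise, so $\nabla(G)=\nabla(C)\times\Ker(\rho_*|_{\nabla(G)})$; but $\Ker(\rho_*|_{\nabla(G)})\subseteq\Ker f_1=N$ and, chasing through, the splitting also shows $f_1$ is injective on $\nabla(G)$, hence $N=1$. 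Then (ii) again yields $G\otimes G\cong\nabla(G^{ab})\times(G\wedge G)$ and $\nabla(G)\cong\nabla(G^{ab})$. The main obstacle I anticipate is the bookkeeping in part (i)—precisely pinning down that $N$ has exponent $2$ and rank $\le\rk_2(G^{ab})$ requires carefully tracking which generators $[x_i,x_i^\varphi]$ of $\nabla(G)$ can collapse modulo $[G',G^\varphi]$, i.e. understanding the kernel of $\nabla(G)\to\nabla(G^{ab})$ at the level of the explicit generating set, and the complement-splitting argument in (iii) requires verifying that $\rho$ really does induce an endomorphism of $\nu(G)$ that behaves well on $\nabla(G)$.
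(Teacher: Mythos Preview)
Your identification $N=E(G)\cap\nabla(G)=\nabla(G)\cap[G',G^\varphi]=\Ker{f_1}$ and all of part (ii) are correct and match the paper. The gap is in the quantitative part of (i): knowing that $f_1([x_i,x_i^\varphi])$ has order $n_i=\abs{x_iG'}$ gives no \emph{upper} bound on the order of $[x_i,x_i^\varphi]$ in $\nabla(G)$, and you never treat the cross generators $[x_i,x_j^\varphi][x_j,x_i^\varphi]$. The paper supplies the missing bound via the epimorphism $\psi:\Gamma(G^{ab})\to\nabla(G)$ from Whitehead's quadratic functor: since $\Gamma(\mm{Z}_n)\cong\mm{Z}_n$ for $n$ odd and $\Gamma(\mm{Z}_n)\cong\mm{Z}_{2n}$ for $n$ even, the kernel of the composite $\Gamma(G^{ab})\to\nabla(G^{ab})$ is elementary abelian of rank exactly $\rk_2(G^{ab})$, and $N$ is a quotient of it. One can recover the exponent-$2$ statement by direct commutator identities (for instance $[x_i,x_i^\varphi]^{2n_i}=1$ follows from $[x_i^{n_i},x_i^\varphi]=[x_i,(x_i^{n_i})^\varphi]^{-1}$ via Corollary~\ref{basicconse}(ii)), but the sharp rank bound---that generators with $n_i$ odd contribute nothing to $N$---does not fall out of such identities and appears to require the $\Gamma$ input. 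Note that your first case of (iii) rests on precisely this rank bound.

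For the complement case of (iii), the retraction argument does not close. The idempotent $\rho_*$ indeed yields $\nabla(G)=\iota_*\nabla(C)\times\Ker(\rho_*\vert_{\nabla(G)})$ with $\Ker(\rho_*\vert_{\nabla(G)})=N$, but this splitting in no way forces $N=1$; your sentence ``the splitting also shows $f_1$ is injective on $\nabla(G)$'' is circular. The paper instead invokes the identity $[xa,(xa)^\varphi]=[a,a^\varphi]$ for $x\in G'$, $a\in C$ (Lemma~3.1(iv) of \cite{Rocco94}), which shows that every generator $[g,g^\varphi]$ of $\nabla(G)$ already lies in $\langle[a,a^\varphi]\mid a\in C\rangle$; thus $\nabla(G)$ equals the image of $\nabla(C)$, the map $f_1$ is an isomorphism, and $N=1$.
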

\begin{proof}$\null$\\
(i) Let $w\in \nabla(G)\cap E(G)$. Then $$f_1(w)=f(w)\in
\nabla(G^{ab})\cap E(G^{ab})=1,$$
and so $N\leq \Ker{f_1}$. Conversely, $\Ker{f_1}=\Ker{f}\cap
\nabla(G)=(G'\otimes G)\cap \nabla(G)\leq N$.\\
It is obvious that $N$ is a central subgroup of $G\otimes G$,
since $N$ is contained in $\nabla(G)$. In order to show that $N$ is an
elementary abelian $2$-group, we recall that there is a sequence
of epimorphisms between finitely generated abelian groups
$$\Gamma(G^{ab})\xrightarrow{\,\,\psi\,\,}\nabla(G)\xrightarrow{\,\,f_1\,\,}\nabla(G^{ab}),$$
where $\Gamma(G^{ab})$ is the Whitehead functor on $G$ (see
\cite{BJR}). In particular, if $N_2=\Ker{\psi f_1}$ and
$N_1=\Ker{\psi}$, then $N\simeq N_2/N_1$. From \cite{Whi} (II.
7), we recall some basic facts about the functor $\Gamma$. First, if
$A$ is a finitely generated abelian group such that $A=\prod_i
A_i$, then
$$\Gamma(A)=\prod_i\Gamma(A_i)\times \prod_{i<j}(A_i\otimes A_j).$$
Moreover, $\Gamma$ acts in the following way on cyclic groups:
$$\Gamma(\mm{Z})\simeq \mm{Z}\quad \textrm{ and }\quad
\Gamma(\mm{Z}_n)\simeq
  \begin{cases}
    \mm{Z}_n & \text{if $n$ is odd,} \\
    \mm{Z}_{2n} & \text{if $n$ is even}.$$
  \end{cases}$$
If the $2$-Sylow subgroup of $G^{ab}$ is
$\prod_{i=1}^r\langle x_iG'\rangle$, it follows that $N_2\le
\prod_{i=1}^r\Gamma(\langle x_iG'\rangle)$ and hence that $N_2$ is an elementary
abelian $2$-group whose rank is $r=\rk_2(G^{ab})$. Since
$N\simeq N_2/N_1$, the result follows.\\

\noindent (ii) By Lemma \ref{gene} and (i), we have
$$\frac{G\otimes G}{N}\simeq \frac{\nabla(G)}{N}\times \frac{E(G)}{N}.$$
Note that $\nabla(G)/N\simeq \nabla(G^{ab})$ and
$$E(G)/N\simeq E(G)\nabla(G)/\nabla(G)=(G\otimes G)/\nabla(G)=G\wedge G.$$\\

\noindent (iii) If $G^{ab}$ has no elements of order two, then
$2$ does not divide the order of the torsion part of $\Gamma(G^{ab})$, and so $\Gamma(G^{ab})\simeq
\nabla(G)\simeq \nabla(G^{ab})$, forcing the result.\\
Assume now that $G'$ has a complement $A$ in $G$. If we write
$g\in G$ as $g=xa$, with $x\in G'$ and $a\in A$, by Lemma
3.1 (iv) in \cite{Rocco94} we have that
$$[g,g^\varphi]=[a,a^\varphi],$$
forcing
$$\nabla(G)=\seq{[g,g^\varphi]\vert g\in G}=\seq{[a,a^\varphi]\vert a\in
A}\simeq \nabla(G^{ab}),$$ and $N=1$.
\end{proof}

\noindent {\bf Observation.} In the proof of Proposition
\ref{spezzGen} (i) if $\abs{x_i}=\abs{x_iG'}$, for
$i=1,\ldots,r$, then $N_1$ has rank $r$, so $N\simeq N_2/N_1=1$,
$\nabla(G)\simeq \nabla(G^{ab})$ and $G\otimes G\simeq
\nabla(G)\times (G\wedge G)$.

\medskip
\begin{cor}\label{J2&H2}
Let $G$ be a group such that $G^{\mathrm{ab}}$ is a finitely
generated abelian group with no elements of order two. Then
$J(G)\simeq \Gamma(G^{\mathrm{ab}})\times M(G)$.
\end{cor}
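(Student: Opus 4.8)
The plan is to extract the statement from the big commutative diagram in the Introduction together with Proposition \ref{spezzGen}(iii). Recall that the top row of that diagram reads
$$\Gamma(G/G')\lto J(G)\lto M(G)\lto 1,$$
and it is a central extension: $J(G)$ is the kernel of $\kappa:G\otimes G\to G'$, it contains (the image of) $\Gamma(G^{\mathrm{ab}})$ centrally, and the quotient $J(G)/\Gamma(G^{\mathrm{ab}})$ is $M(G)$. So it suffices to show that this central extension splits — equivalently, that the copy of $\Gamma(G^{\mathrm{ab}})$ inside $J(G)$ has a complement — and moreover that the relevant map $\Gamma(G^{\mathrm{ab}})\to J(G)$ is injective, so that the left-hand term really is $\Gamma(G^{\mathrm{ab}})$ rather than a proper quotient.

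First I would use the hypothesis that $G^{\mathrm{ab}}$ has no elements of order two. By Proposition \ref{spezzGen}(iii) this gives $\nabla(G)\simeq\nabla(G^{\mathrm{ab}})$ and $G\otimes G\simeq\nabla(G)\times(G\wedge G)$; moreover, as noted in the proof of part (iii), the epimorphisms $\Gamma(G^{\mathrm{ab}})\xrightarrow{\psi}\nabla(G)\xrightarrow{f_1}\nabla(G^{\mathrm{ab}})$ are in fact isomorphisms, because $2$ does not divide the order of the torsion part of $\Gamma(G^{\mathrm{ab}})$ and hence $\psi$ and $f_1$ are surjections of isomorphic finitely generated abelian groups. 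In particular $\psi$ is injective, which is exactly what identifies $\Gamma(G^{\mathrm{ab}})$ with $\nabla(G)$ sitting inside $G\otimes G$.

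Next I would intersect the splitting $G\otimes G\simeq\nabla(G)\times(G\wedge G)$ with $J(G)=\Ker\kappa$. Since $\kappa$ kills $\nabla(G)$ (the elements $g\otimes g$ map to $[g,g]=1$), we have $\nabla(G)\le J(G)$, and the induced decomposition gives $J(G)=\nabla(G)\times\big(J(G)\cap(G\wedge G)\big)$ — here I am viewing $G\wedge G$ as the complement factor in the direct product, on which $\kappa$ restricts to $\kappa'$, so $J(G)\cap(G\wedge G)=\Ker\kappa'\simeq M(G)$ by the Brown–Loday result quoted in the Introduction. Combining with $\nabla(G)\simeq\Gamma(G^{\mathrm{ab}})$ from the previous step yields $J(G)\simeq\Gamma(G^{\mathrm{ab}})\times M(G)$, as claimed.

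I expect the only genuine subtlety to be the bookkeeping that the factor $\Gamma(G/G')$ appearing in the top row of the diagram is literally the image of $\nabla(G)$ under the vertical map $J(G)\hookrightarrow G\otimes G$, so that the isomorphism $\nabla(G)\simeq\Gamma(G^{\mathrm{ab}})$ produced by Proposition \ref{spezzGen} is the same identification used in the diagram; this is immediate from the commutativity of the left column $\Gamma(G/G')\to\nabla(G)$, which is the map $\psi$ above. Once that is checked, the argument is just: split $G\otimes G$, note $\nabla(G)\le J(G)$, restrict, and read off the two factors. No further calculation is needed.
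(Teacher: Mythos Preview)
Your proposal is correct and follows essentially the same route as the paper: use Proposition~\ref{spezzGen}(iii) to split $G\otimes G=\nabla(G)\times H$ with $H\simeq G\wedge G$ and $\nabla(G)\simeq\Gamma(G^{\mathrm{ab}})$, observe $\nabla(G)\le J(G)$, and intersect (the paper phrases this step as Dedekind's modular law) to obtain $J(G)=\nabla(G)\times(H\cap J(G))$ with $H\cap J(G)\simeq J(G)/\nabla(G)\simeq M(G)$. Your additional discussion of the injectivity of $\psi:\Gamma(G^{\mathrm{ab}})\to\nabla(G)$ is already absorbed into the statement $\nabla(G)\simeq\Gamma(G^{\mathrm{ab}})$ from Proposition~\ref{spezzGen}(iii), so no extra bookkeeping is needed.
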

\begin{proof}
Note that $J(G)$ is by definition the kernel of the commutator map
$\kappa:G\otimes G\lto G'$. In particular, $J(G)$ is a central subgroup
of $G$ containing $\nabla(G)$. By Proposition \ref{spezzGen} we
have that $G\otimes G=\nabla(G)\times H$, where $H$ is a subgroup
isomorphic to $G\wedge G$. Therefore, applying Dedekind's
modular law, we have
$$J(G)=\nabla(G)\times (H\cap J(G))\simeq \Gamma(G^{\mathrm{ab}})\times
M(G),$$
since $\nabla(G)\simeq\Gamma(G^{\mathrm{ab}})$ and
$J(G)/\nabla(G)\simeq M(G)$.
\end{proof}

\noindent We recall the notions of \emph{non-abelian tensor
center} $Z^\otimes(G)$ and \emph{non-abelian exterior center} $Z^\wedge(G)$ of a group $G$.
These groups are defined in \cite{Ellis95} as
\begin{align*}
Z^{\otimes}(G)=&\set{g\in G\vert [g,x^\varphi]=1,\, \forall x\in
G}\\
Z^{\wedge}(G)=&\set{g\in G\vert [g,x^\varphi]\in \nabla(G),\,
\forall x\in G}.
\end{align*}
As Ellis showed in \cite{Ellis95} and \cite{Ellis98},
$Z^{\otimes}(G)$ is a characteristic central subgroup of $G$ and is
 the largest normal subgroup $L$ of $G$ such that
$G\otimes G\simeq G/L\otimes G/L$. The non-abelian exterior center $Z^{\wedge}(G)$ is a
central subgroup of $G$ and is equal to the epicenter $Z^*(G)$ of $G$.. In
particular, a group $G$ is capable (that is, is isomorphic to a central quotient $E/Z(E)$
for some group $E$) if and only if $Z^{\wedge}(G)=1$.
\begin{cor}\label{Zcentri}
Let $G$ be any group such that $G^{ab}$ is finitely generated.
With the notation of Proposition \ref{spezzGen}, if $N=1$ then
$Z^{\otimes}(G)=Z^{\wedge}(G)\cap G'$. In particular, the conclusion holds if
$G$ is a finite group of odd order.
\end{cor}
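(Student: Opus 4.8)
The plan is to prove the two inclusions of $Z^{\otimes}(G)=Z^{\wedge}(G)\cap G'$ separately, letting the natural epimorphism $f\colon[G,G^\varphi]\to[G^{ab},(G^{ab})^\varphi]$ induced by the projection $G\to G^{ab}$ carry most of the weight. Two facts will be used, both recorded in (the proof of) Proposition~\ref{spezzGen}(i): $\Ker{f}=[G',G^\varphi]$, and $\Ker{f}\cap\nabla(G)=N$.

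First I would prove $Z^{\otimes}(G)\subseteq Z^{\wedge}(G)\cap G'$; this uses neither the hypothesis $N=1$ nor anything beyond the finite generation of $G^{ab}$. The inclusion $Z^{\otimes}(G)\subseteq Z^{\wedge}(G)$ is immediate from the definitions, since $1\in\nabla(G)$. For $Z^{\otimes}(G)\subseteq G'$, take $g\in Z^{\otimes}(G)$; applying $f$ to $[g,x^\varphi]=1$ for every $x\in G$ gives $gG'\otimes xG'=1$ in $G^{ab}\otimes G^{ab}$, hence $gG'\otimes a=1$ for every $a\in G^{ab}$, i.e.\ $gG'\in Z^{\otimes}(G^{ab})$. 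A short calculation shows $Z^{\otimes}(A)=1$ for any finitely generated abelian group $A$: writing $A=\prod_i\langle a_i\rangle$ and $a=\prod_i a_i^{m_i}$, the component of $a\otimes a_i$ in the cyclic group $\langle a_i\rangle\otimes\langle a_i\rangle$ is the $m_i$-th power of the generator $a_i\otimes a_i$, so $a\otimes a_i=1$ for all $i$ forces $a_i^{m_i}=1$ for all $i$, hence $a=1$. Thus $gG'=1$, that is, $g\in G'$.

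For the reverse inclusion $Z^{\wedge}(G)\cap G'\subseteq Z^{\otimes}(G)$ I would invoke $N=1$. Let $g\in Z^{\wedge}(G)\cap G'$ and $x\in G$. Since $g\in Z^{\wedge}(G)$ we have $[g,x^\varphi]\in\nabla(G)$, and since $g\in G'$ we have $[g,x^\varphi]\in[G',G^\varphi]=\Ker{f}$; therefore $[g,x^\varphi]\in\nabla(G)\cap\Ker{f}=N=1$. Hence $[g,x^\varphi]=1$ for all $x\in G$, i.e.\ $g\in Z^{\otimes}(G)$, and combining with the first inclusion gives $Z^{\otimes}(G)=Z^{\wedge}(G)\cap G'$.

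Finally, if $G$ is finite of odd order then $G^{ab}$ has no element of order $2$, so $\rk_2(G^{ab})=0$ and Proposition~\ref{spezzGen}(i) forces $N=1$, whence the main statement applies. I do not expect a genuine obstacle here: the two points needing some care are the elementary computation that a finitely generated abelian group has trivial tensor center (the only place where finite generation of $G^{ab}$ is really used) and the identity $\nabla(G)\cap\Ker{f}=N$, which is precisely what is established in the course of proving Proposition~\ref{spezzGen}(i).
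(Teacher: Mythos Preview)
Your proof is correct and follows the same overall strategy as the paper: show $Z^{\wedge}(G)\cap G'\subseteq Z^{\otimes}(G)$ using $[Z^{\wedge}(G)\cap G',G^\varphi]\subseteq\nabla(G)\cap[G',G^\varphi]=N=1$, and show $Z^{\otimes}(G)\subseteq Z^{\wedge}(G)\cap G'$ by mapping down to $G^{ab}\otimes G^{ab}$.

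The one noteworthy difference is in the argument for $Z^{\otimes}(G)\subseteq G'$. The paper invokes only the single relation $[x,x^\varphi]=1$ and asserts that $xG'\otimes xG'=1$ in $G^{ab}\otimes G^{ab}$ ``of course forces $G'x$ being the identity element of $G^{ab}$''. Taken literally this is not sufficient: in $A=\mathbb{Z}_4$ the element $a=2$ satisfies $a\otimes a=4(1\otimes 1)=0$ while $a\neq 0$. Your version avoids this by using the full strength of $x\in Z^{\otimes}(G)$, namely $xG'\otimes a=1$ for \emph{every} $a\in G^{ab}$, and then checking directly that $Z^{\otimes}(A)=1$ for any finitely generated abelian $A$ via the diagonal components of $a\otimes a_i$. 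So your treatment of this step is actually cleaner than the paper's; the intended argument is clearly the same, but you have filled in the detail that makes it watertight.
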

\begin{proof}
By the definition of exterior center we have that
$$[Z^{\wedge}(G)\cap G',G^\varphi]\leq N=1.$$
Therefore $Z^{\wedge}(G)\cap G'\leq Z^{\otimes}(G)$. Conversely,
we trivially have $Z^{\otimes}(G)\leq Z^{\wedge}(G)$. Moreover,
$Z^{\otimes}(G)\leq G'$, as if $x\in Z^{\otimes}(G)$ then
$[G'x,(G'x)^\varphi]$ should be the trivial element of the tensor
product $G^{ab}\otimes G^{ab}$, being the image of
$1=[x,x^\varphi]$ under the natural map from $G\otimes G$ to
$G^{ab}\otimes G^{ab}$. This of course forces $G'x$ being the
identity element of $G^{ab}$, so $x\in G'$.
\end{proof}
\begin{que}
With the notation of Proposition \ref{spezzGen}, is it always true
that $$N=[Z^{\wedge}(G)\cap G',G^\varphi]?$$
\end{que}
Note that a positive answer to the previous question will imply,
by Proposition 9 in \cite{BJR}, that
$$\frac{G\otimes G}{N}\simeq \frac{G}{H}\otimes \frac{G}{H},$$
where $H$ is defined to be $Z^{\wedge}(G)\cap G'$. This is precisely 
the case for generalized quaternion groups and semi-dihedral
groups, $ $ as the reader may check by some easy calculations
and the aid of \cite{BJR}, Proposition 13, and \cite{Ellis95},
Proposition 16.

\section{Structure of the non-abelian exterior square}
We will now describe the structure of the non-abelian exterior
square $G\wedge G$ of a group $G$. 
%Throughout this section we
%prefer to denote the generators of $G\otimes G$ with $g_1\otimes
%g_2$ (instead of $[g_1,g_2^\varphi]$ as previously used). Moreover
%we denote with $g_1\wedge g_2$ the coset of $G\wedge G$ containing $g_1\otimes g_2$.\\
Throughout this section we view the non-abelian tensor square $G\otimes G$ as defined at the beginning of the paper, with generators  $g_1\otimes g_2$, rather than via the isomorphic subgroup $[G,G^\varphi]$ of $\nu(G)$. We denote with $g_1\wedge g_2$ the coset of $G\wedge G$ containing $g_1\otimes g_2$.\\

\noindent Let $G$ be a group and let
$R\xrightarrow{i}F\xrightarrow{\pi}G$ be a presentation for $G$,
where $F$ is a free group. Set $F^\circ$ to be the quotient $F/[F,R]$ and set 
$R^\circ$ to be $R/[F,R]$, so that
\begin{equation}\label{Fequa}
1\lto R^\circ\xrightarrow{i}F^\circ\xrightarrow{\eta}G\lto 1
\end{equation}
  is a central exact
sequence. 
From the sequence (\ref{Fequa}) and by Proposition 7 in
\cite{BJR}, there exists a homomorphism
$$\xi:G\otimes G\lto (F^\circ)'$$ such that $\eta\, \xi$ is the
commutator map $\kappa:G\otimes G\lto G'$. In particular, $\xi$
operates as follows on the generators $g_1\otimes g_2$ of
$G\otimes G$:
$$\xi(g_1\otimes g_2)=[f_1,f_2][F,R],$$
where $f_1$ and $f_2$ are any two preimages of $g_1$ and $g_2$ in
$F$, respectively. Of course, $\xi$ is trivial on the central
subgroup $\nabla(G)$, and so it induces a homomorphism
\begin{equation}\label{XiBar}
\overline{\xi}:G\wedge G\lto (F^\circ)'.
\end{equation}
The following Proposition is the main result of this section. The proof
uses an argument similar to that of Theorem 2 in \cite{Mil}.
\begin{pro}\label{exter}
Let $G$ be a group and let $F$  be a free group such that $G\simeq
F/R$ for some normal subgroup $R$ of $F$. Then
$$G\wedge G\simeq\frac{F'}{[F,R]}.$$
\end{pro}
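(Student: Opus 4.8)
The plan is to show that the homomorphism $\overline{\xi}:G\wedge G\to (F^\circ)'=F'/[F,R]$ constructed in (\ref{XiBar}) is an isomorphism, by exhibiting an explicit inverse. First I would observe that there is a natural candidate map in the other direction: since $F$ is free, the obvious assignment $f_1\otimes f_2\mapsto \pi(f_1)\wedge \pi(f_2)$ extends to a homomorphism $F\wedge F\to G\wedge G$ (functoriality of $\wedge$ applied to $\pi:F\to G$). I would then verify that this map kills the image of $[F,R]$, so that it factors through $F'/[F,R]$; more precisely, I want a homomorphism $\theta:F'/[F,R]\to G\wedge G$ with $\theta([f_1,f_2][F,R])=\pi(f_1)\wedge\pi(f_2)$, and then check $\theta$ and $\overline{\xi}$ are mutually inverse on generators.

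The key steps, in order, are as follows. (1) Recall (or cite Miller's Theorem 2 in \cite{Mil}, on which the argument is modelled) that for a free group $F$ one has $F\wedge F\simeq F'$ via $f_1\wedge f_2\mapsto [f_1,f_2]$; equivalently $M(F)=1$ and $\nabla(F)$ is exactly the kernel of $F\otimes F\to F'$. (2) Use functoriality of the exterior square to get $\pi_*:F\wedge F\to G\wedge G$, and transport it through the isomorphism of (1) to obtain a surjection $F'\to G\wedge G$ sending $[f_1,f_2]$ to $\pi(f_1)\wedge\pi(f_2)$. (3) Show $[F,R]$ lies in the kernel: a typical generator is $[[f,r],f_2]\cdot(\dots)$ — actually it suffices to show each $[f,r]\wedge f_2$ (equivalently the image of $[f,r]$ under the surjection from step (2)) is trivial in $G\wedge G$; since $\pi(r)=1$, one has $\pi(f)\wedge\pi([f,r]$-type element$)$ collapsing because $\pi(r)=1$ and because $g\wedge g'=1$ whenever $g'\in\langle g\rangle$-ish relations... here I would lean on the identities in Lemma \ref{basiccomm} and Corollary \ref{basicconse}, in particular that $[g_1,g_2^\varphi]^{-1}=[g_2,g_1^\varphi]$ when one argument is a commutator, plus the fact that $R^\circ=R/[F,R]$ is central in $F^\circ$, to see that the relevant images vanish. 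This produces the desired $\theta:F'/[F,R]\to G\wedge G$. (4) Finally, compose: $\overline{\xi}\circ\theta$ sends $[f_1,f_2][F,R]$ to $\overline{\xi}(\pi(f_1)\wedge\pi(f_2))=[f_1,f_2][F,R]$ by the formula for $\xi$ on generators, so $\overline{\xi}\circ\theta=\mathrm{id}$; and $\theta\circ\overline{\xi}$ sends $g_1\wedge g_2$ to $\pi(f_1)\wedge\pi(f_2)=g_1\wedge g_2$, so $\theta\circ\overline{\xi}=\mathrm{id}$. Hence $\overline{\xi}$ is an isomorphism.

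The step I expect to be the main obstacle is step (3): showing cleanly that the surjection $F'\to G\wedge G$ annihilates $[F,R]$. The subtlety is that $[F,R]$ is generated as a \emph{group} by elements $[f,r]$ with $f\in F$, $r\in R$, but to show it lies in a normal subgroup one must handle arbitrary products and conjugates; the natural move is to argue that the image of $[F,R]$ in $G\wedge G$ is a normal subgroup and then it is enough to kill the generators $[f,r]$. For a generator, one wants to see that the image of $[f,r]$, call it $f\wedge r$-type symbol pushed into $G\wedge G$, is trivial because $\pi(r)=1$; this uses that $g\wedge 1 = 1 = 1\wedge g$ in any exterior square together with the defining relations, but one must be careful that the surjection from step (2) really does send $[f,r]$ to something expressible as $\pi(f)\wedge\pi(r)$ rather than merely to a commutator-shaped word — this is exactly where the identification $F\wedge F\simeq F'$ of step (1) is doing real work, since it lets us read off preimages of commutators. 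Once that bookkeeping is in place, the remaining verifications are the routine ``check on generators'' computations for the two composites, which I would not spell out in full.
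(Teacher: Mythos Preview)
Your approach is correct and is genuinely different from the paper's. The paper also shows that $\overline{\xi}$ is an isomorphism, but proves injectivity indirectly: it restricts $\overline{\xi}$ to $M(G)$, obtaining $\phi:M(G)\to (R\cap F')/[F,R]$, places $\phi$ in an exact sequence $M(F^\circ)\xrightarrow{\eta_*}M(G)\xrightarrow{\phi}R^\circ\cap(F^\circ)'$, and shows $\eta_*$ is the zero map by lifting to $F\otimes F$ and using $J(F)=\nabla(F)$ (i.e.\ $M(F)=1$); then the Short Five Lemma applied to the two extensions $1\to M(G)\to G\wedge G\to G'\to 1$ and $1\to (R\cap F')/[F,R]\to F'/[F,R]\to G'\to 1$ gives injectivity of $\overline{\xi}$. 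You instead exploit $M(F)=1$ in the equivalent form $\kappa'_F:F\wedge F\xrightarrow{\sim}F'$ to build an explicit inverse $\theta=\pi_*\circ(\kappa'_F)^{-1}$. Both routes pivot on the same fact about free groups; yours is more elementary and direct, while the paper's argument has the side benefit of recovering Hopf's formula $M(G)\simeq (R\cap F')/[F,R]$ en route.

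Your worry about step (3) is overstated, and the discussion there can be shortened considerably. Since $R\trianglelefteq F$, the subgroup $[F,R]$ is already generated \emph{as a group} by the set of simple commutators $[f,r]$ with $f\in F$, $r\in R$ (conjugates are again of this form). For each such generator, $(\kappa'_F)^{-1}([f,r])=f\wedge r$ because $\kappa'_F(f\wedge r)=[f,r]$ and $\kappa'_F$ is a bijection; hence the composite $F'\to G\wedge G$ sends $[f,r]$ to $\pi(f)\wedge\pi(r)=\pi(f)\wedge 1=1$. No appeal to Lemma \ref{basiccomm} or Corollary \ref{basicconse} is needed here beyond the trivial identity $g\wedge 1=1$. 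The verifications in step (4) on generators then suffice, exactly as you say, because $F'$ and $G\wedge G$ are generated by simple commutators and simple wedges respectively.
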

\begin{proof}
We will show that the map $\overline{\xi}$ defined in (\ref{XiBar}) is an
isomorphism.\\
The surjectivity of $\overline{\xi}$ is immediate. Let $\pi:F\to
G$ be the projection with kernel $R$. An arbitrary generator
$[f_1,f_2][F,R]$ of $F'/[F,R]$ lies in the image of
$\overline{\xi}$,
since $\overline{\xi}(\pi(f_1)\wedge\pi(f_2))=[f_1,f_2][F,R]$.\\
We now prove that $\overline{\xi}$ is injective.
Using the same notation as in the introduction, for any group $X$
we set $J(X)$ to be $\Ker{\kappa}$ and $M(X)$ to be $\Ker{\kappa'}$, 
where $\kappa$ and $\kappa'$ are
the commutator maps  $\kappa:X\otimes X\lto X'$ and
$\kappa':X\wedge X\lto X'$ respectively, so that $J(X)/\nabla(X)\simeq M(X)$.\\
Let $\phi$ be the map $\overline{\xi}$ restricted to $M(G)$. We
want to show that $\phi$ is injective. Note that $\phi$ is a map 
$$\phi:M(G)\lto R^\circ\cap (F^\circ)'=\frac{F'\cap R}{[F,R]}.$$
Now the quotient map $\eta:F^\circ\lto G$ induces a homomorphism
$$\eta_{\ast}:M(F^\circ)\lto M(G)$$
(which is the restriction to $M(F^\circ)$ of the map sending
$f_1[F,R]\wedge f_2[F,R]$ to $\eta(f_1)\wedge \eta(f_2)$). It is
easy to notice that the following is an exact sequence
$$M(F^\circ)\xrightarrow{\eta_\ast}M(G)\xrightarrow{\phi}R^\circ\cap (F^\circ)'.$$
Therefore, in order to show that $\phi$ is injective, we will
prove that $\eta_\ast$ is the trivial map. If
$\alpha:J(F^\circ)\lto M(F^\circ)$ is the quotient map, we show
that
$\eta_\ast(\alpha(J(F^\circ))=\eta_\ast(\alpha(\nabla(F^\circ))$.\\
Let $\overline{w}=\prod(\overline{x_i}\otimes \overline{y_i})\in
J(F^{\circ})$, with $\overline{x_i},\overline{y_i}\in F^\circ$.
Then there exist $x_i,y_i\in F$ and $w=\prod(x_i\otimes y_i)\in
F\otimes F$ such that $\lambda_{\otimes}(w)=\overline{w}$ and
$\lambda(\kappa(w))=\kappa(\overline{w})=1$ (here
$\lambda_{\otimes}:F\otimes F\lto F^\circ\otimes F^\circ$ is the
map induced by the projection $\lambda:F\lto F^\circ$). Thus
$\kappa(w)=\prod[x_i,y_i]\in \Ker \lambda=[F,R]$. As $F$ is a free
group its Schur multiplicator is trivial, so $M(F)=1$, that is,
$J(F)=\nabla(F)$. In particular, modulo $\nabla(F)$, the product $w$ is
equivalent to $\prod(f_j\otimes r_j)$, for some $f_j\in F$ and
$r_j\in R$. So $w=\prod(f_j\otimes r_j)z$, for some $z\in
\nabla(F)$. We note that $(\alpha(\lambda_{\otimes}(z))=1$. Now
\begin{align*}
\eta_\ast(\alpha(\overline{w}))&=\eta_\ast(\alpha(\lambda_{\otimes}(w)))=
\eta_\ast(\alpha(\lambda_{\otimes}(\prod(f_j\otimes r_j)z)))\\
                          &= \prod(\eta_\ast \alpha(\overline{f_j}\otimes 1))= 1,
\end{align*}
therefore $\eta_\ast$ is the trivial map and $\phi$ is injective.\\
Finally in order to show that $\overline{\xi}$ is an isomorphism
between $G\wedge G$ and $F'/[F,R]$, we apply the Short Five Lemma
(\cite{AtMcD}, Proposition 2.10) to the following commutative
diagram.
%\begin{align*}
%1\lto &M(G) \xrightarrow{\,\,i\,\,}G\wedge G\xrightarrow{\,\,\kappa'\,\,}\,G'\lto 1\\
%      &\quad \downarrow{\small{\phi}}\quad\quad\quad\downarrow{\overline{\xi}}\quad\quad\quad\downarrow{1_{G'}}  \\
%1\lto&\frac{R\cap F'}{[F,R]}\xrightarrow{\,\,\,
%i\,\,\,}\frac{F'}{[F,R]}\,\xrightarrow{\,\,\,\eta\,\,\,}\,G'\lto 1
%\end{align*}
\begin{equation*}
\begin{CD}
1@>>>M(G)@>i>>G\wedge G@>\kappa'>>G'@>>>1\\
@. @VV\phi V @VV\overline{\xi}V @VV1_{G'}V\\
1@>>>\frac{R \cap F'}{[F,R]}@>i>> \frac{F'}{[F,R]} @>\eta>> G @>>> 1\\
\end{CD}
\end{equation*}
As both $\phi$ and the identity map of $G'$ are injective, it
follows that also $\overline{\xi}$ is injective. This concludes
the proof that $\overline{\xi}$ is an isomorphism.
\end{proof}
As consequences of the results above we now describe the
structures of the non-abelian tensor squares of some groups that are ``universal''
 in the sense that they are free in suitable varieties.

\begin{cor}[\cite{BJR}, Proposition 6]\label{freegroups}
Let $F_n$ be a free group of rank $n$. Then
$$F_n\otimes F_n\simeq \mm{Z}^{n(n+1)/2}\times (F_n)'.$$
\end{cor}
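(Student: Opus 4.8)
The plan is to combine Propositions~\ref{spe} and \ref{exter} directly, since $F_n$ is the extreme case of the setup in Section~3 in which the defining relator group $R$ is trivial. First I would apply Proposition~\ref{spe} to $G = F_n$: the abelianization $(F_n)^{ab} \simeq \mathbb{Z}^n$ is finitely generated, and it has no elements of order two, so the hypotheses of Proposition~\ref{spe} are met and we obtain
$$F_n\otimes F_n \simeq \nabla(F_n)\times (F_n\wedge F_n).$$
It then remains to identify the two factors on the right.

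Next I would compute $\nabla(F_n)$. By Proposition~\ref{spezzGen}(iii) (or the remark that when $(F_n)^{ab}$ has no $2$-torsion one has $\Gamma(G^{ab})\simeq\nabla(G)\simeq\nabla(G^{ab})$), we get $\nabla(F_n)\simeq\nabla(\mathbb{Z}^n)$. On the other hand, for the free abelian group $A=\mathbb{Z}^n$ with the standard basis $\{a_1,\dots,a_n\}$, the description of $A\otimes A=\nabla(A)\times E_{\mathcal A}(A)$ from Section~2 shows $\nabla(A)$ is generated by the $n$ elements $[a_i,a_i^\varphi]$ together with the $\binom n2$ elements $[a_i,a_j^\varphi][a_j,a_i^\varphi]$; since $A\otimes A\simeq\mathbb{Z}^{n^2}$ with $E_{\mathcal A}(A)\simeq\mathbb{Z}^{\binom n2}$, the complementary factor $\nabla(A)$ is free abelian of rank $n^2 - \binom n2 = n(n+1)/2$. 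Hence $\nabla(F_n)\simeq\mathbb{Z}^{n(n+1)/2}$.

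For the exterior square I would invoke Proposition~\ref{exter} with the trivial presentation $1\to 1\to F_n\xrightarrow{\mathrm{id}}F_n\to 1$, i.e. $R=1$. Then $[F_n,R]=1$ and Proposition~\ref{exter} gives
$$F_n\wedge F_n\simeq\frac{(F_n)'}{[F_n,R]}=\frac{(F_n)'}{1}=(F_n)'.$$
Putting the three identifications together yields $F_n\otimes F_n\simeq\mathbb{Z}^{n(n+1)/2}\times(F_n)'$, as claimed. (Alternatively, this last step is immediate from the fact that a free group has trivial Schur multiplicator, so $\kappa':F_n\wedge F_n\to (F_n)'$ is an isomorphism; but routing it through Proposition~\ref{exter} keeps the argument uniform with the subsequent corollaries.) I do not expect any serious obstacle here: every ingredient has already been established, and the only thing to be slightly careful about is the rank bookkeeping for $\nabla(\mathbb{Z}^n)$, namely that the stated generating set of size $n+\binom n2$ is in fact a free basis, which follows because it is a complement to $E_{\mathcal A}(\mathbb{Z}^n)\simeq\mathbb{Z}^{\binom n2}$ inside $\mathbb{Z}^{n^2}$.
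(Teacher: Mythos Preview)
Your proposal is correct and follows essentially the same approach as the paper's proof: apply Proposition~\ref{spezzGen} (equivalently Proposition~\ref{spe}) to split $F_n\otimes F_n$ as $\nabla(F_n)\times(F_n\wedge F_n)$, identify $\nabla(F_n)\simeq\nabla(\mathbb{Z}^n)\simeq\mathbb{Z}^{n(n+1)/2}$, and then use Proposition~\ref{exter} with $R=1$ to get $F_n\wedge F_n\simeq(F_n)'$. The only difference is that you spell out the rank count for $\nabla(\mathbb{Z}^n)$ in more detail than the paper does.
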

\begin{proof}
Since $F_n^{ab}$ is a free abelian group of rank $n$, by Proposition
\ref{spezzGen} we have $\nabla(F_n)\simeq \nabla(F_n^{ab})\simeq
\mm{Z}^{n(n+1)/2}$ and $F_n\otimes F_n\simeq
\nabla(F_n^{ab})\times F_n\wedge F_n$. Finally
Proposition~\ref{exter} (with $F=F_n$ and $R=1$) gives $F_n\wedge F_n \simeq (F_n)'$.
\end{proof}
Proposition \ref{exter} gives a nice description in the case of
free nilpotent groups of finite rank. For a more detailed
description of this case we refer to the paper \cite{BMMfree}.

\begin{cor}[\cite{BMMfree}, Corollary 1.7]\label{freenilpotent}
Let $G=\mathcal{N}_{n,c}$ be the free nilpotent group of rank
$n>1$ and class $c\geq 1$. Then
$$G\otimes G\simeq \mm{Z}^{n(n+1)/2}\times (\mathcal{N}_{n,c+1}) '.$$
\end{cor}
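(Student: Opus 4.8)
The plan is to combine Proposition~\ref{spezzGen} (to split off the $\nabla$ factor) with Proposition~\ref{exter} (to identify the exterior square) applied to a convenient free presentation of $G=\mathcal{N}_{n,c}$. First I would observe that $G^{ab}$ is free abelian of rank $n$, so by Proposition~\ref{spezzGen}(iii) (the ``no elements of order two'' case) we get $\nabla(G)\simeq\nabla(G^{ab})\simeq\mm{Z}^{n(n+1)/2}$ and $G\otimes G\simeq\nabla(G)\times(G\wedge G)$. Thus it remains only to show that $G\wedge G\simeq(\mathcal{N}_{n,c+1})'$, and then the desired formula follows immediately.

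For the identification of $G\wedge G$, I would take $F=F_n$ to be the free group of rank $n$ and let $R=\gamma_{c+1}(F_n)$, so that $G\simeq F_n/\gamma_{c+1}(F_n)=\mathcal{N}_{n,c}$. By Proposition~\ref{exter},
$$G\wedge G\simeq\frac{F_n'}{[F_n,R]}=\frac{F_n'}{[F_n,\gamma_{c+1}(F_n)]}=\frac{\gamma_2(F_n)}{\gamma_{c+2}(F_n)}.$$
On the other hand, $\mathcal{N}_{n,c+1}=F_n/\gamma_{c+2}(F_n)$, so its derived subgroup is $(\mathcal{N}_{n,c+1})'=\gamma_2(F_n)/\gamma_{c+2}(F_n)$ (using that in any group the derived subgroup of $F/N$ is $F'N/N$, and here $\gamma_{c+2}(F_n)\leq\gamma_2(F_n)=F_n'$ since $c\geq0$). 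Hence $G\wedge G\simeq(\mathcal{N}_{n,c+1})'$, which completes the argument.

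The only mild subtlety to check carefully is the identity $[F_n,\gamma_{c+1}(F_n)]=\gamma_{c+2}(F_n)$, which is just the definition of the lower central series, together with the fact that $R=\gamma_{c+1}(F_n)$ is normal in $F_n$ so that $R\xrightarrow{i}F_n\xrightarrow{\pi}G$ is indeed a valid presentation to which Proposition~\ref{exter} applies. I do not anticipate a genuine obstacle here; the main point is simply to choose the right presentation (namely with $R=\gamma_{c+1}(F_n)$ rather than some larger normal subgroup) so that $[F,R]$ computes out to $\gamma_{c+2}(F_n)$ and the quotient $F'/[F,R]$ is visibly the derived subgroup of the next free nilpotent group. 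One could alternatively cite Corollary~\ref{freegroups} combined with Proposition~\ref{exter} applied to $\mathcal{N}_{n,c}$ directly, but the presentation $F_n/\gamma_{c+1}(F_n)$ is the cleanest route.
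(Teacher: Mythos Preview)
Your proof is correct and follows essentially the same approach as the paper: split off $\nabla(G)\simeq\mm{Z}^{n(n+1)/2}$ via Proposition~\ref{spezzGen}(iii) using that $G^{ab}$ is free abelian, then apply Proposition~\ref{exter} with $F=F_n$ and $R=\gamma_{c+1}(F_n)$. You are simply more explicit than the paper in spelling out the identities $[F_n,\gamma_{c+1}(F_n)]=\gamma_{c+2}(F_n)$ and $(\mathcal{N}_{n,c+1})'=\gamma_2(F_n)/\gamma_{c+2}(F_n)$, which the paper leaves to the reader.
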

\begin{proof}
As before, $G^{ab}$ is a free abelian group of rank $n$, and so we
have $\nabla(G)\simeq \nabla(G^{ab})\times \mm{Z}^{n(n+1)/2}$ and
$G\otimes G\simeq \nabla(G^{ab})\times G\wedge G$. Finally apply
Proposition~\ref{exter} with $F$ free group of rank $n$ and
$R=\gamma_{c+1}(F)$.
\end{proof}
\begin{cor}\label{freesolvable}
Let $F$ be the free group of finite rank $n>1$, let $d$ be a natural
number, and let $G=F/F^{(d)}$ be the free solvable
group $\mathcal{S}_{n,d}$ of derived length $d$ and rank $n>1$. Then
$$G\otimes G\simeq \mm{Z}^{n(n+1)/2}\times F'/[F,F^{(d)}]$$
is an extension of a nilpotent group of class $\leq 3$ by a free
solvable group of derived length $d-2$ and infinite rank. In
particular, if $d=2$, then $G\otimes G$ is a nilpotent group.
\end{cor}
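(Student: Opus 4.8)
The plan is to obtain the displayed isomorphism from Propositions~\ref{spezzGen} and \ref{exter}, and then to read off the asserted structure of $G\otimes G$ directly from the derived series of $F$. Since $G=F/F^{(d)}$ has $G^{ab}=F/F'\simeq\mm{Z}^n$ free abelian, in particular $G^{ab}$ has no element of order two, so Proposition~\ref{spezzGen}(iii) applies and gives $\nabla(G)\simeq\nabla(G^{ab})\simeq\mm{Z}^{n(n+1)/2}$ together with $G\otimes G\simeq\nabla(G)\times(G\wedge G)$. Taking $R=F^{(d)}$ in Proposition~\ref{exter} then gives $G\wedge G\simeq F'/[F,F^{(d)}]$, which is the first displayed isomorphism. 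From here on we may assume $d\ge 2$, the cases $d\le 1$ being trivial.

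Write $Q=F'/[F,F^{(d)}]$. The subgroup to single out is $N=F^{(d-1)}/[F,F^{(d)}]$; this is well defined and normal in $Q$ (indeed in $F/[F,F^{(d)}]$) because $[F,F^{(d)}]\le F^{(d)}\le F^{(d-1)}\le F'$ and all these terms are normal in $F$. By the third isomorphism theorem $Q/N\simeq F'/F^{(d-1)}$. A trivial induction on $k$ shows $(F')^{(k)}=F^{(k+1)}$ for all $k\ge0$, so $Q/N\simeq F'/(F')^{(d-2)}$; and since $F$ is free of rank $n>1$, the Nielsen--Schreier theorem shows that $F'$ is free of countably infinite rank. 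Hence $Q/N$ is exactly the free solvable group of derived length $d-2$ and infinite rank.

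To bound the nilpotency class of $N$, I would compute its lower central series inside $F^{(d-1)}$ modulo $[F,F^{(d)}]$. The preimage in $F^{(d-1)}$ of $\gamma_2(N)$ is $[F^{(d-1)},F^{(d-1)}]\,[F,F^{(d)}]=F^{(d)}$, using $[F,F^{(d)}]\le F^{(d)}$; and the preimage of $\gamma_3(N)$ is $[F^{(d)},F^{(d-1)}]\,[F,F^{(d)}]$, which lies in $[F,F^{(d)}]$ because $F^{(d-1)}\le F$. Thus $\gamma_3(N)=1$, so $N$ is nilpotent of class at most $2$, hence at most $3$. Consequently $G\otimes G\simeq\mm{Z}^{n(n+1)/2}\times Q$ is an extension of the nilpotent group $\mm{Z}^{n(n+1)/2}\times N$ by the free solvable group $Q/N$. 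When $d=2$ we have $F^{(d-1)}=F'$, so $Q=N$ and $G\otimes G\simeq\mm{Z}^{n(n+1)/2}\times N$ is nilpotent (of class at most $2$).

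The proof has no genuine obstacle: Propositions~\ref{spezzGen} and \ref{exter} do the work, and what remains is bookkeeping with the derived series of a free group. The two points deserving a moment's care are the identity $(F')^{(k)}=F^{(k+1)}$ --- this is what makes $F'/F^{(d-1)}$ a \emph{free} solvable group of the stated derived length, on the free generators of $F'$ --- and the appeal to Nielsen--Schreier to see that $F'$ has infinite rank when $n>1$; one should also note that the stated bound on the class of the nilpotent part is not sharp, the computation above in fact yielding class at most $2$.
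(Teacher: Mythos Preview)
Your proof is correct and follows essentially the same route as the paper: both invoke Propositions~\ref{spezzGen} and~\ref{exter} for the displayed isomorphism, single out the same normal subgroup $N=F^{(d-1)}/[F,F^{(d)}]$, and identify the quotient with $F'/F^{(d-1)}$. Your direct computation that $\gamma_3(N)=1$ is equivalent to the paper's observation that $N$ is a quotient of $F^{(d-1)}/\gamma_3(F^{(d-1)})$; in particular your remark that the nilpotency bound sharpens to class~$\le 2$ is accurate and is implicit in the paper's own argument, even though the stated bound in the corollary is~$3$.
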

\begin{proof}
Once again, Propositions \ref{spezzGen} and \ref{exter} imply that
$G\otimes G$ has the described factorization. Note that $F^{(d-1)}/[F,F^{(d)}]$ 
is a normal subgroup of the group
$F'/[F,F^{(d)}]$ 
and that $F^{(d-1)}/[F,F^{(d)}]$ is nilpotent of class at most $3$, as it is a
quotient of $F^{(d-1)}/\gamma_3(F^{(d-1)})$. So $M=
\mm{Z}^{n(n+1)/2}\times F^{(d-1)}/[F,F^{(d)}]$ is also nilpotent
of class at most $3$ and $G\otimes G/M$ is
isomorphic to $F'/F^{(d-1)}$, so it is free solvable of derived
length $d-2$. The fact that $F'/F^{(d-1)}$ is of infinite rank
follows from the well-known fact that $F'$ is not finitely
generated.
\end{proof}
We recall, in view of Theorem A in \cite{Bau}, that the Schur
multiplicator of $\mathcal{S}_{n,d}$ is not finitely generated. In
particular, $\mathcal{S}_{n,d}\wedge \mathcal{S}_{n,d}$ can also be
viewed as an extension of a central abelian group of infinite rank
by a free solvable group of derived length $d-1$.
\medskip

We end this section by applying our results to a particular finite $p$-group.\\
Let $d$ be an integer and, as before, denote by $F_d$ the free
group on $d$ generators. We recall that for every integer $i$ the
group $\gamma_i(F_d)/\gamma_{i+1}(F_d)$ is free abelian of rank
$$m_d(i):=\frac{1}{i}\sum_{t\vert i}\mu(t)d^{i/t}, $$
where $\mu$ is the Mobius function (see \cite{Karpi} Chapter 3.2).\\
We also recall for a fixed prime number $p$ the notion of \emph{lower central $p$-series}
 of a group $G$. The terms
of this series are $\set{\lambda_i(G)}_{i\geq 1}$, where 
\begin{align*}
\lambda_1(G)&=G\\
\lambda_{k+1}(G)&=[\lambda_k(G),G]\lambda_k(G)^p, \textrm{ for
} k\geq 1.
\end{align*}
We note that this series is the most rapidly descending central
series of $G$ whose factors have exponent $p$ (see \cite{Karpi},
Chapter 3). The lower central $p$-series will be used in the next
section to find some bounds on the orders of the non-abelian
tensor and exterior squares of finite $p$-groups.  Now we
exhibit an explicit calculation of these objects in a particular
case.\\  For every pair of positive integers $d$ and $c$ define
$G_{d,c}$ to be the quotient $F_d/\lambda_{c+1}(F_d)$. According to \cite{Karpi}
(Theorem 3.2.10), $G_{d,c}$ is a finite $p$-group of class $c$ and
order $p^m$, where $m=\sum_{j=1}^c(c+1-j)m_d(j)$.

\begin{cor}\label{lambdagroups} With the above notation, we have that
$G_{d,c}\wedge G_{d,c}\simeq (G_{d,c+1})'$ and
$$G_{d,c}\otimes G_{d,c}\simeq (\mathbb{Z}_{p^{c}})^{d(d+1)/2}\times (G_{d,c+1})'.$$
\end{cor}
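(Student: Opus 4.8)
\textbf{Proof plan for Corollary \ref{lambdagroups}.}
The plan is to apply Propositions \ref{spezzGen} and \ref{exter} with the free presentation $G_{d,c}\simeq F_d/R$, where $R=\lambda_{c+1}(F_d)$. First I would compute $\nabla(G_{d,c})$. Since $\lambda_{c+1}(F_d)$ contains $\gamma_2(F_d)^{?}$-type terms, the abelianization $G_{d,c}^{ab}$ is $F_d^{ab}/(\text{image of }\lambda_{c+1})$; one checks from the definition $\lambda_{k+1}(G)=[\lambda_k(G),G]\lambda_k(G)^p$ that $\lambda_{c+1}(F_d)\cap F_d' $ maps onto $\gamma_2$ modulo $p$-th powers in such a way that $G_{d,c}^{ab}\simeq(\mathbb{Z}_{p^{c}})^{d}$; indeed $\lambda_{c+1}(F_d)F_d'/F_d'=(F_d^{ab})^{p^{c}}$ because the lower central $p$-series of the free abelian group $F_d^{ab}$ is $\{(F_d^{ab})^{p^{k-1}}\}$. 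Hence $G_{d,c}^{ab}$ is free of rank $d$ over $\mathbb{Z}_{p^{c}}$, so by Proposition \ref{spezzGen}(iii) (the case ``$G^{ab}$ has no elements of order two'' applies when $p$ is odd; when $p=2$ one instead uses the Observation after Proposition \ref{spezzGen}, since the generators $x_i$ can be chosen so that $|x_i|=|x_iG'|=2^{c}$ in $G_{d,c}$, as $F_d/\lambda_{c+1}(F_d)$ has exponent dividing $p^{c}$ on its abelianization and the natural generators achieve this order) we get $N=1$, whence
$$\nabla(G_{d,c})\simeq\nabla(G_{d,c}^{ab})\simeq\nabla\bigl((\mathbb{Z}_{p^{c}})^{d}\bigr)\simeq(\mathbb{Z}_{p^{c}})^{d(d+1)/2}$$
and $G_{d,c}\otimes G_{d,c}\simeq\nabla(G_{d,c})\times(G_{d,c}\wedge G_{d,c})$.

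The last displayed isomorphism for $\nabla$ follows from the structure of the ordinary tensor square of a finitely generated abelian group recorded in Section 2: for $A=(\mathbb{Z}_{n})^{d}$ one has $A\otimes A=\nabla(A)\times E_{\mathcal A}(A)$ with $\nabla(A)=\langle[a_i,a_i^\varphi],[a_i,a_j^\varphi][a_j,a_i^\varphi]\mid i<j\rangle$, and each of the $d$ ``diagonal'' generators has order $n$ while each of the $\binom{d}{2}$ ``symmetric'' generators has order $n$, giving $\nabla(A)\simeq(\mathbb{Z}_{n})^{d+\binom d2}=(\mathbb{Z}_{n})^{d(d+1)/2}$; here $n=p^{c}$. (For $p$ odd this already matches $\Gamma(G_{d,c}^{ab})$; for $p=2$ the Observation guarantees we land on $\nabla(G_{d,c}^{ab})$ rather than the larger $\Gamma$.)

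For the exterior square I would apply Proposition \ref{exter} directly: with $F=F_d$ and $R=\lambda_{c+1}(F_d)$,
$$G_{d,c}\wedge G_{d,c}\simeq\frac{F_d'}{[F_d,\lambda_{c+1}(F_d)]}.$$
It then remains to identify this quotient with $(G_{d,c+1})'$, i.e.\ with $F_d'\lambda_{c+2}(F_d)/\lambda_{c+2}(F_d)$. This is where the only real content lies: one must show $[F_d,\lambda_{c+1}(F_d)]=F_d'\cap\lambda_{c+2}(F_d)$, equivalently that $[F_d,\lambda_{c+1}(F_d)]$ already contains $F_d'\cap\lambda_{c+2}(F_d)$ (the reverse inclusion being clear since $[F_d,\lambda_{c+1}(F_d)]\le\lambda_{c+2}(F_d)$ and $[F_d,\lambda_{c+1}(F_d)]\le F_d'$). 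By definition $\lambda_{c+2}(F_d)=[F_d,\lambda_{c+1}(F_d)]\,\lambda_{c+1}(F_d)^{p}$, so $F_d'\cap\lambda_{c+2}(F_d)=[F_d,\lambda_{c+1}(F_d)]\cdot\bigl(F_d'\cap\lambda_{c+1}(F_d)^{p}[F_d,\lambda_{c+1}(F_d)]\bigr)$; thus the point is that the $p$-th powers from $\lambda_{c+1}(F_d)$ that happen to lie in $F_d'$ are already commutators mod $[F_d,\lambda_{c+1}(F_d)]$. Writing everything in $\overline F:=F_d/[F_d,\lambda_{c+1}(F_d)]$ — in which $\overline{\lambda_{c+1}(F_d)}$ is central — the class of $x^{p}$ for $x\in\lambda_{c+1}(F_d)$ depends only on the class of $x$ in the free abelian group $\lambda_{c+1}(F_d)/\lambda_{c+2}(F_d)\oplus(\lambda_{c+1}(F_d)\cap F_d')/\dots$; using the graded structure of the free group one checks that $\lambda_{c+1}(F_d)^{p}$ meets $F_d'$ only in elements already congruent mod $[F_d,\lambda_{c+1}(F_d)]$ to elements of $F_d'\lambda_{c+2}(F_d)$, giving $\overline{F_d'}\simeq\overline{F_d'\lambda_{c+2}(F_d)}=(G_{d,c+1})'$. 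Combining, $G_{d,c}\otimes G_{d,c}\simeq(\mathbb{Z}_{p^{c}})^{d(d+1)/2}\times(G_{d,c+1})'$, as claimed. The main obstacle is precisely this last identification of $F_d'/[F_d,\lambda_{c+1}(F_d)]$ with $(G_{d,c+1})'$ — tracking how the $p$-th-power relations interact with the commutator subgroup — which is the one genuinely $p$-group-theoretic step; everything else is a formal consequence of the two Propositions and the abelian tensor-square computation of Section 2.
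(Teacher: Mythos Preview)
Your treatment of the splitting $G_{d,c}\otimes G_{d,c}\simeq\nabla(G_{d,c})\times(G_{d,c}\wedge G_{d,c})$ and of $\nabla(G_{d,c})\simeq(\mathbb{Z}_{p^c})^{d(d+1)/2}$ is exactly the paper's argument, including the use of the Observation after Proposition~\ref{spezzGen} for $p=2$.

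For $G_{d,c}\wedge G_{d,c}$ you take a different route from the paper. The paper does \emph{not} invoke Proposition~\ref{exter}; instead it builds an explicit covering group $\overline{G}_{d,c+1}$ of $G_{d,c}$ as a quotient of $G_{d,c+1}$ by a complement to $(G_{d,c+1})'\cap(L_{c+1}/L_{c+2})$ inside $L_{c+1}/L_{c+2}$, and then checks that $(\overline{G}_{d,c+1})'\simeq(G_{d,c+1})'$. The identification $G\wedge G\simeq(\hat G)'$ for a covering group $\hat G$ (from \cite{BJR}) then finishes. Your route via Proposition~\ref{exter} is more direct in spirit, but it shifts all the work to the identification
\[
\frac{F_d'}{[F_d,\lambda_{c+1}(F_d)]}\;\simeq\;(G_{d,c+1})'\;=\;\frac{F_d'}{F_d'\cap\lambda_{c+2}(F_d)},
\]
i.e.\ to the equality $[F_d,\lambda_{c+1}(F_d)]=F_d'\cap\lambda_{c+2}(F_d)$.

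That equality is where your argument has a genuine gap. The inclusion $\subseteq$ is clear, but the reverse amounts to showing that, in $\overline F=F_d/[F_d,\lambda_{c+1}(F_d)]$, one has $\overline{F_d'}\cap\overline{\lambda_{c+1}(F_d)}^{\,p}=1$. Your appeal to ``the graded structure of the free group'' and the sentence containing ``$\lambda_{c+1}(F_d)/\lambda_{c+2}(F_d)\oplus(\lambda_{c+1}(F_d)\cap F_d')/\dots$'' does not establish this; in particular nothing you wrote rules out a $p$-th power from $\lambda_{c+1}$ lying nontrivially in $\overline{F_d'}$. This is precisely the $p$-group-theoretic content that the paper imports from Theorem~3.2.10 of \cite{Karpi}, which gives
\[
M(G_{d,c})\;\simeq\;\frac{\gamma_2(F_d)\cap\lambda_{c+1}(F_d)}{\gamma_2(F_d)\cap\lambda_{c+2}(F_d)}.
\]
Combining this with Hopf's formula $M(G_{d,c})=(F_d'\cap\lambda_{c+1}(F_d))/[F_d,\lambda_{c+1}(F_d)]$ and the inclusion $[F_d,\lambda_{c+1}(F_d)]\le F_d'\cap\lambda_{c+2}(F_d)$ (both quotients are finite, and the obvious surjection between them is then an isomorphism) yields exactly the equality you need. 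So your approach can be made to work, but only by invoking the same external input the paper uses; as written, the ``one checks'' step is not a proof.
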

\begin{proof}
Let $G:=G_{d,c}$. We first prove that
\begin{equation}\label{tsdp}
G\otimes G\simeq \nabla(G)\times (G\wedge G).
\end{equation}
For $p$ odd (\ref{tsdp}) follows from  Proposition \ref{spezzGen}, while
for the case $p=2$ a little more care is needed. More precisely,
we observe that if $F_d=\langle f_1,\ldots,f_d\rangle$, then the
image $x_i$ in $G=F_d/\lambda_{c+1}(F_d)$ of the generator
$f_i$ of $F_d$ has order $p^c$ for each $i=1,\ldots, d$.
Moreover, by Theorem 3.2.10 in \cite{Karpi}, $G^{ab}$ is
isomorphic to a direct product of $d=m_d(1)$  cyclic groups
$\mm{Z}_{p^c}$ of order $p^c$. So now our result follows from
the observation following Proposition \ref{spezzGen}.

We have that $\nabla(G)\simeq (\mm{Z}_{p^{c}})^{d(d+1)/2}$. We
claim that the derived subgroup of a covering group for $G$ is
isomorphic to $(G_{d,c+1})'$. In the following, let
$L_i$ denote $\lambda_i(F_d)$, $i\geq 1$. We note that the group
$G_{d,c+1}=F_d/L_{c+2}$ has $L_{c+1}/L_{c+2}$ as a
central elementary abelian subgroup. Moreover, the subgroup
$$\frac{M}{L_{c+2}} \mbox{\ defined to be\ }(G_{d,c+1})'\cap\frac{L_{c+1}}{L_{c+2}}=
\frac{\gamma_2(F_d)L_{c+2}\cap L_{c+1}}{L_{c+2}}$$
%
%=\frac{L_{c+2}(\gamma_2(F_d)\cap
%L_{c+1})}{L_{c+2}}
%
is isomorphic to
$$\frac{L_{c+1}\cap
\gamma_2(F_d)}{L_{c+2}\cap\gamma_2(F_d)},$$
which is isomorphic to $M(G)$, by Theorem 3.2.10 in \cite{Karpi}.
Now let $H/L_{c+2}$ be a complement of $M/L_{c+2}$ in
$L_{c+1}/L_{c+2}$ and  consider the factor group
$$\overline{G}_{d,c+1}=\frac{G_{d,c+1}}{H/L_{c+2}}.$$
If $N\le {G}_{d,c+1}$ we denote with $\overline N$ the image of $N$ in $\overline{G}_{d,c+1}$ under the canonical projection; it follows that
$$M(G)\simeq\overline{M}\leq Z(\overline{G}_{d,c+1})\cap
(\overline{G}_{d,c+1})'.$$
Moreover, $\overline{G}_{d,c+1}/\overline{M}\simeq F_d/L_{c+1}=G$,
so $\overline{G}_{d,c+1}$ is a covering group for $G$. Finally,
note that
$$(\overline{G}_{d,c+1})'=\frac{(F_d)'H}{H}\simeq \frac{(F_d)'}{(F_d)'\cap
H}=\frac{(F_d)'}{(F_d)'\cap L_{c+2}}=(G_{d,c+1})'.$$
\end{proof}
%
%     p gruppi
%
\section{Non-abelian tensor squares of finite $p$-groups}
Throughout this section $G$ is a finite $p$-group, for some
prime $p$. We start with a lemma concerning the lower central
$p$-series of $G$. 
 We again identify the group $G\otimes G$ with its
isomorphic image $[G,G^\varphi]$ in the group $\nu(G)$ defined in
Section 2.

\begin{lem}\label{frat}
Let $G$ be a finite $p$-group. Then for every $k\geq 1$,
$$[\lambda_k(G),G^\varphi]=[G,(\lambda_k(G))^\varphi].$$
\end{lem}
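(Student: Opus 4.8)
The plan is to prove the equality $[\lambda_k(G),G^\varphi]=[G,(\lambda_k(G))^\varphi]$ inside $\nu(G)$ by induction on $k$, exploiting the fact that $\lambda_k(G)$ is generated by iterated commutators together with $p$-th powers of elements lower in the series, and then reducing these generators to cases already covered by earlier results. The base case $k=1$ is trivial. Recall $\lambda_{k+1}(G)=[\lambda_k(G),G]\lambda_k(G)^p$, so $[\lambda_{k+1}(G),G^\varphi]$ is generated (as a subgroup of $\nu(G)$, using standard commutator expansions and the fact that $[G,G^\varphi]$ is normal in $\nu(G)$ by Lemma \ref{basiccomm}) by elements of the form $[[a,g],h^\varphi]$ and $[a^p,h^\varphi]$ with $a\in\lambda_k(G)$, $g,h\in G$.

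For the commutator-type generators, note that $[a,g]\in\gamma_2(G)\leq G'$ lies in $G'$, so by Corollary \ref{basicconse}(ii) we have $[[a,g],h^\varphi]=[h,[a,g]^\varphi]^{-1}$, and by Lemma \ref{basiccomm}(iii) this equals $[a,g,h^\varphi]$; more to the point, $[[a,g],h^\varphi]^{-1}=[h,[a,g]^\varphi]$, which manifestly sits in $[G,(\gamma_2(\lambda_k(G))\cdot\ldots)^\varphi]$. Actually the cleanest route is: since $[a,g]\in G'$ and $a\in\lambda_k(G)$, Corollary \ref{basicconse}(iii) gives $[\lambda_{k},G']^\varphi$-symmetry directly; combined with the identity $[\langle[a,g]:a\in\lambda_k(G),g\in G\rangle,G^\varphi]=[G,\langle[a,g]\rangle^\varphi]$ (which follows because all these commutators lie in $G'$), one gets that the contribution from $[\lambda_k(G),G]$ behaves symmetrically. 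For the $p$-th power generators $[a^p,h^\varphi]$, I would use the standard expansion of a commutator with a power: in any group, $[a^p,b]=[a,b]^{a^{p-1}}[a,b]^{a^{p-2}}\cdots[a,b]$, so in $\nu(G)$, $[a^p,h^\varphi]$ is a product of conjugates of $[a,h^\varphi]$ by elements of $G$; by Lemma \ref{basiccomm}(i)–(ii) those conjugations are controlled and each factor lies in $[\lambda_k(G),G^\varphi]$, which by the inductive hypothesis equals $[G,\lambda_k(G)^\varphi]$. Symmetrically, $[a^p,h^\varphi]$ relates to $[h,(a^p)^\varphi]$ via Lemma \ref{basiccomm}(v) (the element $[g_1,g_2^\varphi][g_2,g_1^\varphi]$ is central, hence $[a,h^\varphi]$ and $[h,a^\varphi]^{-1}$ differ by a central element), so one can trade the $\varphi$ across the bracket modulo central elements and conclude $[a^p,h^\varphi]\in[G,\lambda_{k+1}(G)^\varphi]$.

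Assembling the two types of generators, I would show $[\lambda_{k+1}(G),G^\varphi]\leq[G,\lambda_{k+1}(G)^\varphi]$, and then the reverse inclusion follows by the same argument applied with the roles of the two copies of $G$ interchanged (the defining relations of $\nu(G)$ and Lemma \ref{basiccomm} are symmetric under $\varphi$ up to the identifications in Lemma \ref{basiccomm}(i)), giving equality. The main obstacle I anticipate is handling the $p$-th power terms cleanly: expanding $[a^p,h^\varphi]$ produces conjugates $\null^{u}[a,h^\varphi]$ for $u\in G$, and one must verify via Lemma \ref{basiccomm}(i)–(ii) that each such conjugate still lies in $[\lambda_k(G),G^\varphi]$ and that the symmetric statement $[h,(a^p)^\varphi]\in[G,\lambda_{k+1}(G)^\varphi]$ holds — this requires carefully tracking how the centrality relations in Lemma \ref{basiccomm}(iv)–(v) let one commute $\varphi$ past powers. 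A subtlety worth isolating as a preliminary sub-claim is: for $a\in\lambda_k(G)$ and $h\in G$, both $[a^p,h^\varphi]\in[\lambda_k(G),G^\varphi]$ and $[a^p,h^\varphi]\in[G,\lambda_{k+1}(G)^\varphi]$; once that is established the induction closes immediately.
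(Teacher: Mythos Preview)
Your treatment of the commutator-type generators $[[a,g],h^\varphi]$ is essentially correct (if a bit tangled in presentation): since $[a,g]\in G'$, Corollary~\ref{basicconse}(ii) gives $[[a,g],h^\varphi]=[h,[a,g]^\varphi]^{-1}\in[G,\lambda_{k+1}(G)^\varphi]$, and this matches the paper's argument via Lemma~\ref{basiccomm}(ii).

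The genuine gap is in your handling of the $p$-th power generators. You correctly observe that $[a^p,h^\varphi]$ is a product of conjugates of $[a,h^\varphi]$ and hence lies in $[\lambda_k(G),G^\varphi]=[G,\lambda_k(G)^\varphi]$, but this only places it at level $k$, not $k+1$. Your attempt to bridge the gap via Lemma~\ref{basiccomm}(v) fails: the central element $z=[a^p,h^\varphi][h,(a^p)^\varphi]$ is indeed central in $\nu(G)$, but there is no reason for $z$ to lie in $[G,\lambda_{k+1}(G)^\varphi]$. Centrality alone gives no containment in a specified proper subgroup, and $a^p$ need not lie in $G'$ (so Corollary~\ref{basicconse}(ii) does not force $z=1$). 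Thus from $[h,(a^p)^\varphi]\in[G,\lambda_{k+1}(G)^\varphi]$ you cannot conclude the same for $[a^p,h^\varphi]$.

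The paper closes this gap with a sharper argument. Set $R=[\lambda_k(G),G,G^\varphi]$, already shown to lie in $[G,\lambda_{k+1}(G)^\varphi]$, and observe that $[\lambda_k(G),G^\varphi]'\leq R$, so one may work in the abelian quotient $[\lambda_k(G),G^\varphi]/R$. Two companion congruences are then proved by induction on $m$: $[x^m,a^\varphi]\equiv[x,a^\varphi]^m\pmod R$ for $x\in\lambda_k(G)$, and $[y,(b^m)^\varphi]\equiv[y,b^\varphi]^m\pmod R$ for $b\in\lambda_k(G)$. Now the inductive hypothesis is used not merely as a set equality but to rewrite $[x,a^\varphi]$ as a product $w_1\cdots w_l$ with each $w_i=[y_i,b_i^\varphi]$, $b_i\in\lambda_k(G)$. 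Taking $p$-th powers in the abelian quotient and applying the second congruence gives $[x^p,a^\varphi]\equiv\prod_i[y_i,(b_i^p)^\varphi]\pmod R$, which lands in $[G,(\lambda_k(G)^p)^\varphi]\cdot R\leq[G,\lambda_{k+1}(G)^\varphi]$. The point you are missing is precisely this pair of power-congruences modulo $R$, together with the use of the inductive hypothesis to transfer the $p$-th power from the left entry to the right entry of the bracket.
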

\begin{proof}
We prove the result by induction on $k$. Since the result is trivial for
$k=1$, we assume $[\lambda_k(G),G^\varphi]=[G,(\lambda_k(G))^\varphi]$ 
 and show that
$[\lambda_{k+1}(G),G^\varphi]=[G,(\lambda_{k+1}(G))^\varphi]$.\\
First note that, since $[\lambda_k(G),G,G^\varphi]$ and
$[\lambda_k(G)^p,G^\varphi]$ are both normal in $\nu(G)$, we have,
using $[xy,a^\varphi]=\null^{x}[y,a^\varphi][x,a^\varphi]$ with
$x\in [\lambda_k(G),G]$, $y\in \lambda_k(G)^p$ and $a\in G$, that
$$[\lambda_{k+1}(G),G^\varphi]=[\lambda_k(G),G,G^\varphi][\lambda_k(G)^p,G^\varphi].$$
(As $[xy,a^\varphi]=\null^{x}[y,a^\varphi][x,a^\varphi]$, with
$x\in [\lambda_k(G),G]$, $y\in \lambda_k(G)^p$ and $a\in G$).\\
Using Lemma \ref{basiccomm}(ii), we have that
$$[\lambda_k(G),G,G^\varphi]=[\lambda_k(G)^\varphi,G^\varphi,G]=
[G,[\lambda_k(G),G]^\varphi]\leq [G,\lambda_{k+1}(G)^\varphi].$$
Thus our proof will be complete if we show that
$[\lambda_k(G)^p,G^\varphi]\leq
[G,\lambda_{k+1}(G)^\varphi]$.\\
Define $R$ to be $[\lambda_k(G),G,G^\varphi]$
($=[G,[\lambda_k(G),G]^\varphi]$).\\
Note that $R$ contains the derived subgroup of
$[\lambda_k(G),G^\varphi]$. To see this, we observe that
$[\lambda_k(G),G^\varphi]'$ is generated by the elements
$$[[x,a^\varphi],[y,b^\varphi]],\quad \textrm{ where }x,y\in\lambda_k(G)
\textrm{ and }a^\varphi,b^\varphi\in G^\varphi,$$
and, by Lemma \ref{basiccomm}(i) and the defining properties of
$\nu(G)$, we have that
$[[x,a^\varphi],[y,b^\varphi]]=[[x,a],[y,b]^\varphi]\in R$.\\
We claim that the following hold:
\begin{equation}\label{uno}
[x^m,a^\varphi]\in [x,a^\varphi]^mR\quad \textrm{ for all }
x\in \lambda_k(G), a^\varphi\in G^\varphi, m\in \mm{N}
\end{equation}
\begin{equation}\label{due}
[y,(b^m)^\varphi]\in [y,b^\varphi]^mR\quad \textrm{ for all }
y\in G, b^\varphi\in (\lambda_k(G))^\varphi,  m\in \mm{N}.
\end{equation}
We prove (\ref{uno}) by induction on $m$. The proof of
(\ref{due}) is similar.\\
If $m=1$ then (\ref{uno}) is trivially true. Let $m\geq 2$. Then
\begin{align*}
[x^m,a^\varphi]&=[x\cdot
x^{m-1},a^\varphi]=\null^x[x^{m-1},a^\varphi][x,a^{\varphi}]\\
&=[x^{m-1},(\null^xa)^\varphi][x,a^{\varphi}].
\end{align*}
Now the claim is proved since, by induction on $m$, the term
$[x^{m-1},(\null^xa)^\varphi]$ lies in the coset
\begin{align*}
[x,(\null^xa)^\varphi]^{m-1}R&=[x,[x,a]^\varphi
a^\varphi]^{m-1}R=\\
&=([x,[x,a]^\varphi]\cdot\null^{[x,a]^\varphi}[x,a^\varphi])^{m-1}R=\\
&=([x,[x,a]^\varphi]\cdot[x,a^\varphi])^{m-1}R=([x,a^\varphi])^{m-1}R,
\end{align*}
by a repeated use of Lemma \ref{basiccomm}, and the fact that
$[x,[x,a]^\varphi]=[x,a,x^\varphi]^{-1}\in R$. Therefore our
claims (\ref{uno}) and (\ref{due}) are true, and we now complete
the proof of the lemma as follows. We have that
$[\lambda_k(G)^p,G^\varphi]$ is generated by elements of the form
$[x^p,a^\varphi]$ with $x\in \lambda_k(G)$ and $a^\varphi\in
G^\varphi$. By (\ref{uno}), $[x^p,a^\varphi]\in
([x,a^\varphi])^pR$. Now $[x,a^\varphi]\in
[\lambda_k(G),G^\varphi]=[G,(\lambda_k(G))^\varphi]$ by the
inductive hypothesis, so we may write
$$[x,a^\varphi]=w_1\cdot\ldots\cdot w_l,$$
where $w_i=[y_i,b_i^\varphi]$, $y_i\in G$ and $b_i^\varphi\in
\lambda_k(G)^\varphi$ for $i=1,\ldots,l$. In particular, since
$[\lambda_k(G),G^\varphi]/R$ is abelian we have that
$([x,a^\varphi])^pR=w_1^p\ldots w_l^pR.$
Finally, by (\ref{due}) $w_i^pR=[y_i,(b_i^p)^\varphi]R$ for all
$i=1,\ldots,l$, forcing
$$[x^p,a^\varphi]\in
R[G,(\lambda_k(G)^p)^\varphi]=[G,(\lambda_{k+1}(G))^\varphi].$$
\end{proof}
The following result is an improvement of Corollary 3.12 in
\cite{Rocco91}. In his PhD thesis A. McDermott proves this result using
arguments different from ours (see \cite{EllisMcD}).
\begin{pro}\label{order} Let $G$ be a finite group of order $p^n$ ($p$ a prime)
and let $d=d(G)$ be the minimum number of generators of $G$. Then
$p^{d^2}\leq \abs{[G,G^\varphi]}\leq p^{nd}$.
\end{pro}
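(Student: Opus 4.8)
The plan is to establish the two inequalities separately, working throughout inside $\nu(G)$ with $G\otimes G$ identified with $[G,G^\varphi]$. For the upper bound $\abs{[G,G^\varphi]}\leq p^{nd}$, I would use the fact that $G$ is generated by $d$ elements $x_1,\dots,x_d$, so every generator $g\otimes h$ of $G\otimes G$ can be rewritten, via the basic commutator identities of Lemma \ref{basiccomm} and the defining relations of $\nu(G)$, in terms of the $[x_i,h^\varphi]$ with $h$ ranging over $G$; more precisely, using $[gg',h^\varphi]=\null^g[g',h^\varphi][g,h^\varphi]$ and induction on word length in the $x_i$, the subgroup $[G,G^\varphi]$ is generated as a group by $\{[x_i,g^\varphi]\mid 1\le i\le d,\ g\in G\}$. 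For fixed $i$, the map $g^\varphi\mapsto [x_i,g^\varphi]$ is (essentially) a homomorphism from $G$ modulo the action, so the subgroup $[x_i,G^\varphi]$ is a quotient of (a subgroup related to) $G$ and hence has order at most $\abs{G}=p^n$. Since $[G,G^\varphi]$ is generated by the $d$ subgroups $[x_i,G^\varphi]$, each of order dividing $p^n$, and $[G,G^\varphi]$ is a finite $p$-group, its order is at most $p^{nd}$.

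For the lower bound $p^{d^2}\le\abs{[G,G^\varphi]}$, the natural strategy is to pass to the abelianization. Let $\bar G=G^{ab}$; by Theorem 3.2.10-type considerations and since $d(G)=d(G^{ab})$ for a $p$-group, $\bar G$ is a direct product of $d$ nontrivial cyclic $p$-groups $\seq{\bar x_i}$. The projection $G\to\bar G$ induces a surjection $G\otimes G\to\bar G\otimes\bar G=\bar G\otimes_{\mm Z}\bar G$, so it suffices to show $\abs{\bar G\otimes\bar G}\ge p^{d^2}$. But for $\bar G=\prod_{i=1}^d\mm Z_{p^{c_i}}$ with each $c_i\ge1$, the ordinary tensor product is $\bar G\otimes_{\mm Z}\bar G=\prod_{i,j}\mm Z_{p^{\min(c_i,c_j)}}$, which has order $p^{\sum_{i,j}\min(c_i,c_j)}\ge p^{d^2}$ since each of the $d^2$ summands contributes at least $p$. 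This gives the lower bound directly.

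I expect the main obstacle to be making the upper-bound argument fully rigorous: the claim that $[G,G^\varphi]$ is generated by $\bigcup_i[x_i,G^\varphi]$ requires a careful induction using the $\nu(G)$ relations (each $\null^g$-conjugate of $[y,h^\varphi]$ can be absorbed, by Lemma \ref{basiccomm}(i) and (ii), without leaving this set of generators modulo lower terms), and the claim that $\abs{[x_i,G^\varphi]}\le p^n$ needs the observation that $g\mapsto[x_i,g^\varphi]$ factors through an abelian-type quotient because of the relation $g\otimes hk=(g\otimes h)(\null^hg\otimes\null^hk)$ together with the fact that $[x_i,h^\varphi,k^\varphi]$ lies deeper. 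A clean way to organize this is to note that each $[x_i,G^\varphi]$ is the normal closure description of a single ``tensor line'' and that finiteness of $G$ forces finiteness of each such line with the stated bound; then the product of $d$ such subgroups in the finite $p$-group $[G,G^\varphi]$ has order at most the product of the orders. Both bounds together yield $p^{d^2}\le\abs{[G,G^\varphi]}\le p^{nd}$, as claimed.
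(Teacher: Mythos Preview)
Your lower bound is fine and matches the paper's idea (the paper maps onto $G/\Phi(G)\otimes G/\Phi(G)$, which is elementary abelian of order exactly $p^{d^2}$; your route via $G^{ab}$ works equally well).

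The upper bound, however, has a genuine gap, and in fact two separate steps fail.

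First, the claim $\abs{[x_i,G^\varphi]}\le p^n$ is not justified. The map $g\mapsto[x_i,g^\varphi]$ is a derivation, not a homomorphism: one has $[x_i,(gh)^\varphi]=[x_i,g^\varphi]\cdot{}^{g^\varphi}[x_i,h^\varphi]$, and ${}^{g^\varphi}[x_i,h^\varphi]=[{}^{g}x_i,({}^{g}h)^\varphi]$ need not lie in the set $\{[x_i,k^\varphi]:k\in G\}$. So that set is not a subgroup in general, and the subgroup it generates has no obvious bound in terms of $\abs{G}$ alone.

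Second, even granting $\abs{[x_i,G^\varphi]}\le p^n$ for each $i$, ``generated by $d$ subgroups of order at most $p^n$'' does not imply order at most $p^{nd}$, even for $p$-groups: the dihedral group of order $8$ is generated by two subgroups of order $2$. You would need a set-theoretic factorisation $[G,G^\varphi]=[x_1,G^\varphi]\cdots[x_d,G^\varphi]$, i.e.\ a normal-form statement, and nothing in your sketch supplies one.

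The paper avoids both problems by arguing inductively along the lower central $p$-series. If $\lambda_k(G)$ is the last nontrivial term, the projection $G\to\overline G=G/\lambda_k(G)$ induces an epimorphism $[G,G^\varphi]\to[\overline G,\overline G^\varphi]$ with kernel $[\lambda_k(G),G^\varphi][G,\lambda_k(G)^\varphi]$. Lemma~\ref{frat}, the technical core of this section, shows these two factors coincide; and because $\lambda_k(G)$ is \emph{central} and elementary abelian, the map $(a,g)\mapsto[a,g^\varphi]$ on $\lambda_k(G)\times G$ is genuinely bilinear, so the kernel has order at most $p^{d\cdot d_k}$ where $d_k=d(\lambda_k(G))$. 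Induction down the series then gives $\abs{[G,G^\varphi]}\le p^{d\sum_i d_i}=p^{dn}$. The bilinearity available at each step---precisely because one is tensoring against a central elementary abelian piece---is exactly what your one-shot argument with the full group $G$ lacks.
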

\begin{proof}
Of course $\abs{[G,G^\varphi]}\geq p^{d^2}$, as $G\otimes G$
admits  $G/\Phi(G)\otimes G/\Phi(G)$ as a quotient, and $G/\Phi(G)\otimes G/\Phi(G)$
 is elementary abelian of order $p^{d^2}$, since it is an
ordinary tensor product.\\
Let $\lambda_k(G)$ be the last non-trivial term of the series
$\set{\lambda_i(G)}_i$, and let $\pi:G\lto
\overline{G}:=G/\lambda_k(G)$ be the quotient map. $\pi$ induces a
natural epimorphism, say, $\widetilde{\pi}:[G,G^\varphi]\lto
[\overline{G},\overline{G}^\varphi]$. According to \cite[Remark
3]{Rocco91} and using the previous Lemma, the kernel of
$\widetilde{\pi}$ consists in the subgroup
$$\Ker{\widetilde{\pi}}=[\lambda_k(G),G^\varphi][G,\lambda_k(G)^\varphi]
=[\lambda_k(G),G^\varphi].$$
Since $\lambda_k(G)$ is a central elementary abelian subgroup of
$G$, by Lemma \ref{basiccomm}(ii), we have that
$\Ker{\widetilde{\pi}}$ is an elementary abelian $p$-subgroup
lying in the center of $\nu(G)$. Thus the map
\begin{align*}
\theta:\lambda_k(G)\times G&\lto [\lambda_k(G),G^\varphi]\\
(a,g)&\lmto [a,g^\varphi],
\end{align*}
is bilinear. Let $\lambda_k(G)$ be generated by the set
$\set{a_i\vert i=1,\ldots, d_k}$ and let $G$ be generated by
$\set{g_i\vert i=1,\ldots,d}$. Therefore $\Ker{\widetilde{\pi}}$
is generated by the set
$$\set{[a_i,g_j^\varphi]\vert i=1,\ldots,d_k,\, j=1,\ldots, d},$$
forcing $\abs{\Ker{\widetilde{\pi}}}\leq p^{d\cdot d_k}$, and
$\abs{[G,G^\varphi]}\leq p^{d\cdot
d_k}\abs{[\overline{G},\overline{G}^\varphi]}$. By induction we
obtain that
$$\abs{[G,G^\varphi]}\leq p^{d\cdot d_k}\cdot\ldots\cdot
p^{d^2}=p^{d\sum_{i=1}^{k}d_i}=p^{nd}.$$
\end{proof}
\begin{rem}
Homocyclic abelian groups show that the upper bound in the
Proposition \ref{order} is best possible. Another example in which
the upper bound is reached is when $G$ is the group
$G_{2,2}:=F_2/\lambda_3(F_2)$.
\end{rem}
As a consequence of our results we have the following bound for
the order of the Schur multiplicator of finite $p$-groups.
\begin{cor}\label{boundSchur}
Let $G$ be a finite $p$-group of order $p^n$ with $d=d(G)$
generators. If $p$ is odd, the order of the Schur multiplicator
$M(G)$ of $G$ is at most $p^{d(n-(d+1)/2)}$. If $p=2$, then
$\abs{M(G)}\leq 2^{d(n-(d+3)/2)}$.
\end{cor}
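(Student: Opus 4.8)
The plan is to squeeze $|M(G)|$ between an upper bound for $|G\otimes G|$ and a lower bound for $|\nabla(G)|$. Since $M(G)$ sits inside $G\wedge G$ as $\Ker{\kappa'}$ and $G\wedge G=(G\otimes G)/\nabla(G)$, we have $|M(G)|\le|G\wedge G|=|G\otimes G|/|\nabla(G)|$. By Proposition~\ref{order}, $|G\otimes G|=|[G,G^\varphi]|\le p^{nd}$, so the whole problem reduces to producing a good lower bound for $|\nabla(G)|$.

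For that, recall that by the Burnside basis theorem $d(G^{\mathrm{ab}})=d(G)=d$, so $G^{\mathrm{ab}}\simeq\prod_{i=1}^{d}\mathbb{Z}_{p^{e_i}}$ with every $e_i\ge 1$. By Proposition~\ref{spezzGen}(i) the projection $G\to G^{\mathrm{ab}}$ restricts to an epimorphism $\nabla(G)\to\nabla(G^{\mathrm{ab}})$, hence $|\nabla(G)|\ge|\nabla(G^{\mathrm{ab}})|$. The non-abelian tensor square of the abelian group $G^{\mathrm{ab}}$ is the ordinary tensor product, so the decomposition $A\otimes A=\nabla(A)\times E_{\mathcal{A}}(A)$ of Section~2 gives $|\nabla(G^{\mathrm{ab}})|=p^{\sum_i e_i+\sum_{i<j}\min(e_i,e_j)}\ge p^{\,d+d(d-1)/2}=p^{\,d(d+1)/2}$. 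Putting this together, $|M(G)|\le p^{\,nd-d(d+1)/2}=p^{\,d(n-(d+1)/2)}$, which is exactly the asserted bound when $p$ is odd.

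When $p=2$ we need to gain an extra factor $2^{d}$, and this is where the real work lies. The saving should come from the doubling of Whitehead's functor on $2$-power cyclic groups: since $\Gamma(\mathbb{Z}_{2^e})\simeq\mathbb{Z}_{2^{e+1}}$ for $e\ge1$, one computes $|\Gamma(G^{\mathrm{ab}})|=2^{d}\,|\nabla(G^{\mathrm{ab}})|\ge 2^{\,d(d+3)/2}$, so it would suffice to upgrade the crude inequality $|\nabla(G)|\ge|\nabla(G^{\mathrm{ab}})|$ to $|\nabla(G)|\ge 2^{\,d(d+3)/2}$; dividing $p^{nd}$ by this then yields $|M(G)|\le 2^{\,nd-d(d+3)/2}=2^{\,d(n-(d+3)/2)}$. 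Concretely one would study the epimorphism $\psi\colon\Gamma(G^{\mathrm{ab}})\to\nabla(G)$ from the proof of Proposition~\ref{spezzGen}(i) and show that its kernel is no larger than the $2$-ranks force — for instance by controlling the orders of the central elements $[x_i,x_i^\varphi]$ — so that $\nabla(G)$ still records the full $\Gamma$-contribution of the $2$-power cyclic direct factors of $G^{\mathrm{ab}}$. I expect exactly this last inequality to be the main obstacle, since the straightforward estimate above only delivers the weaker bound $2^{\,d(n-(d+1)/2)}$ in the case $p=2$, and the sharpening is precisely the place where the prime being $2$ (and the attendant doubling) has to be exploited.
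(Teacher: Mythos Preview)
For odd $p$ your argument is correct and matches the paper's: both use $|M(G)|\le |G\wedge G|=|G\otimes G|/|\nabla(G)|$, bound the numerator by $p^{nd}$ via Proposition~\ref{order}, and bound the denominator below by $|\nabla(G^{\mathrm{ab}})|\ge p^{d(d+1)/2}$ via the epimorphism $\nabla(G)\twoheadrightarrow\nabla(G^{\mathrm{ab}})$. (The paper writes $|M(G)|\cdot|G'|=|G\wedge G|$ and then implicitly uses $|G'|\ge 1$, which comes to the same thing.)

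For $p=2$ you have located the crux exactly. The paper's own proof also reduces to the assertion $|\nabla(G)|\ge 2^{d(d+3)/2}$, which it states without justification. Your caution is warranted: this inequality is \emph{false}. For $G=(\mathbb{Z}/2\mathbb{Z})^d$ one has $\nabla(G)=\nabla(G^{\mathrm{ab}})$ of order exactly $2^{d(d+1)/2}$, and the kernel of $\psi\colon\Gamma(G^{\mathrm{ab}})\to\nabla(G)$ attains its maximal possible size $2^d$. Consequently the stated $p=2$ bound itself fails: already for $G=\mathbb{Z}/2\mathbb{Z}\times\mathbb{Z}/2\mathbb{Z}$ we have $n=d=2$, $|M(G)|=2$, while the claimed bound is $2^{d(n-(d+3)/2)}=2^{-1}$. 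The map $\psi$ goes in the wrong direction to furnish lower bounds on $|\nabla(G)|$, so the strategy you sketch for extracting the extra factor $2^d$ cannot succeed in general. In short, the obstacle you flag is not a gap in your write-up but an error in the paper; only the odd-$p$ bound (equivalently, $|M(G)|\le p^{d(n-(d+1)/2)}$ for all $p$) is actually established.
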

\begin{proof}
By Proposition \ref{exter} and the definition of the exterior
square
$$\abs{M(G)}\abs{G'}=\abs{G\wedge G}=\frac{\abs{G\otimes
G}}{\abs{\nabla(G)}}.$$
If $p$ is odd, by Proposition \ref{spezzGen}, $\nabla(G)\simeq
\nabla(G^{ab})$, and so $\abs{\nabla(G)}\geq p^{d(d+1)/2}$.
If $p=2$, then $\abs{\nabla(G)}\geq p^{d(d+3)/2}$. The
proof is now completed by using the bounds given in Proposition
\ref{order}.
\end{proof}

%----------------------------------------------------------

%------------------------------------------------------------
\medskip

{\small\sc Department of Mathematics and Computer Science, Saint Louis
  University, St. Louis, MO 63103 USA}, email: {\it blythrd@slu.edu}

\medskip
{{\small\sc Dipartimento di Matematica "Ulisse Dini"', Universit\`a di
  Firenze, Viale Morgagni, 67/a, 50134 Firenze, Italy},  email: {\it
  fumagal@math.unifi.it} 

\medskip

{{\small\sc Dipartimento di Matematica, Universit\`a di Bologna, Piazza
di Porta San Donato 5, 40127 Bologna, Italy},  email: {\it mmorigi@dm.unibo.it}}


\begin{thebibliography}{10}

\bibitem{AtMcD}
M.F.~Atiyah, I.G.~Macdonald,  \emph{Introduction to commutative
algebra}, Addison-Wesley Publishing Co., Reading, Mass.-London-Don
Mills, (1969).

\bibitem{Bau}
G.~Baumslag, R.~Strebel, M.W.~Thomson, \emph{On the multiplicator
of $F/\gamma \sb{c}R$},  J. Pure Appl. Algebra, \textbf{16}
(1980), no. 2, 121--132.

\bibitem{BMpoly}
R.D.~Blyth, R.F.~Morse, \emph{Computing the nonabelian tensor
squares of polycyclic groups}, to appear.

\bibitem{BMMfree}
R.D.~Blyth, P.~Moravec, R.F.~Morse, \emph{On the nonabelian tensor
squares of free nilpotent groups of finite rank}, in \emph{Computational Group Theory and the Theory of Groups}, 27-44. Contemporary Mathematics \textbf{470}, American Math. Soc., Providence.

\bibitem{BJR}
R.~Brown, D.L.~Johnson, E.F.~Robertson, \emph{Some computations of
nonabelian tensor products of groups}, J. Algebra, \textbf{111}
(1987), no. 1, 177--202.

\bibitem{BL1}
R.~Brown, J.L. Loday, \emph{Excision homotopique en basse dimension.},
C.R. Acad. Sci. Pars S\'er. I Math, \textbf{298}(1984), no. 15,353--356.

\bibitem{BL2}
R.~Brown, J.L. Loday, \emph{Van Kampen theorems for diagram of spaces.},
Topology, \textbf{26}(1987), no. 3,311--335. With an appendix by M. Zisman.

\bibitem{Ellis95}
G.J.~Ellis, \emph{Tensor products and $q$-crossed modules},
J.~London Math. Soc., \textbf{51}  (1995),  no. 2, 243--258.

\bibitem{Ellis98}
G.J.~Ellis, \emph{On the capability of groups}, Proc. Edinburgh
Math. Soc., \textbf{41} (1998),  no. 3, 487--495.

\bibitem{EllisLeo}
G.J.~Ellis, F.~Leonard, \emph{Computing Schur multipliers and tensor products of finite
groups.}, Proc. Roy. Irish Acad. Sect. A, \textbf{95} (1995), no. 2,
133--147.

\bibitem{EllisMcD}
G.J.~Ellis, A.~McDermott, \emph{Tensor products of prime-power
groups}, J. Pure Appl. Algebra, \textbf{132} (1998), no. 2,
119--128.

\bibitem{EN}
B.~Eick, W.~Nickel, \emph{Computing the Schur multiplicator and the
  nonabelian tensor square of a polycyclic
group.}, J. Algebra, to appear.

\bibitem{Karpi}
G.~Karpilovsky, \emph{The Schur multiplier}. London Mathematical
Society Monographs, New Series, 2. Oxford University Press, New
York, 1987.

\bibitem{Mil}
C.~Miller, \emph{The second homology group of a group; relations
among commutators}, Proc. Amer. Math. Soc., \textbf{3} (1952),
588--595.

\bibitem{Mo}
R.F.~Morse, \emph{Advances in Computing the  Nonabelian Tensor
Square of  Polycyclic Groups},
 Irish Math. Soc. Bull., \textbf{56} (2005),
115--123.

\bibitem{Rob} D.J.S~Robinson, \emph{A course in the theory of groups}. Second edition. Graduate Texts in Mathematics, 80. Springer-Verlag, New York, 1996.

\bibitem{Rocco91}
N.R.~Rocco, \emph{On a construction related to the nonabelian
tensor square of a group}, Bol. Soc. Brasil. Mat. (N.S.),
\textbf{22} (1991), no. 1, 63--79.

\bibitem{Rocco94}
N.R.~Rocco, \emph{A presentation for a crossed embedding of finite
solvable groups}, Comm. Algebra, \textbf{22} (1994),  no. 6,
1975--1998.

\bibitem{Whi}
J.H.C.~Whitehead, \emph{A certain exact sequence}, Ann. of Math.
(2) \textbf{52} (1950), 51--110.
\end{thebibliography}
\end{document}